\documentclass [11pt,] {article}
\usepackage{amsmath,amssymb}
\usepackage{amsthm,amsfonts,amscd,epsfig,lineno}
\usepackage{xcolor}
\usepackage{multirow}
\usepackage{cite,epic}
\usepackage{adjustbox}
\usepackage{graphicx}
\usepackage[left=2cm]{geometry}
\usepackage{lscape}

\usepackage{amsmath, amssymb, amsthm}

\newtheorem{theorem}{Theorem}[section]
\newtheorem{lemma}[theorem]{Lemma}

\newtheorem{corollary}[theorem]{Corollary}
\newtheorem{definition}[theorem]{Definition}
\newtheorem{example}[theorem]{Example}
\newtheorem{construction}[theorem]{Construction}
\newcount\refno
\usepackage{array}

\newcommand{\ket}[1]{|#1\rangle}
\newcommand{\qcell}[1]{\(\begin{gathered}#1\end{gathered}\)}

\usepackage{cite}
\numberwithin{equation}{section}

\title { Resolvable quantum Latin squares with maximal cardinality
}

\author {   \footnotesize    Yuyuan Zhang, Haitao Cao\footnote{Corresponding author. \; E-mail address: caohaitao@njnu.edu.cn.  }\\
       \scriptsize    School of Mathematical Sciences, Ministry of Education Key Laboratory for NSLSCS,\\ \scriptsize Nanjing Normal University, Nanjing 210023, China. }
\date{}

\begin {document}
\parindent=0.5cm
\baselineskip=0.6cm
\maketitle
 \begin {abstract}
\baselineskip=0.6cm
In this paper we investigate the existence of a maximal-cardinality \(t\)-resolvable quantum Latin square (\(t\text{-}\text{RQLS}(v)\)). We determine the existence of a maximal-cardinality \(1\text{-}\text{RQLS}(v)\) with \(17\) possible exceptions based on the established research result and construct a maximal-cardinality \(2\text{-}\text{RQLS}(n^2)\) for every integer \(n\geq6\).

\noindent {\bf Keywords:} quantum Latin square, \(t\)-resolvable quantum Latin square, maximal cardinality, mutually orthogonal quantum Latin squares\\
\end{abstract}

\section{Introduction}
\label{intro}
A {\it quantum  Latin square}  of order $v$  is a  $v\times v$  square, denoted as $\text{QLS}(v)$, whose entries are unit column vectors from $v$-dimensional  Hilbert space $\mathcal{H}_{v}$, and such that each row and column forms an orthonormal basis of $\mathcal{H}_{v}$. In 2016, Musto and Vicary \cite{Mu2} introduced quantum Latin squares as a quantum-theoretic generalization of classical Latin squares, showing their utility in constructing unitary error bases (UEBs). Subsequent work by Musto \cite{Mu1} established the notions of weakly orthogonal and orthogonal quantum Latin squares, which proved crucial for constructing mutually unbiased bases (MUBs). Goyeneche et al. \cite{Goyeneche} extended quantum Latin structures by introducing quantum Latin cubes, establishing their connections with absolutely maximally entangled (AME) states and $k$-uniform states. In 2021, Nechita and Pillet \cite{Nechita} introduced the concept of quantum Sudoku, a special class of quantum Latin squares.

A \(\text{QLS}(v)\) can be obtained from a classical Latin square by replacing each entry \(i\in [v]\) with the computational basis vector \(|i\rangle\in \mathcal{H}_{v}\), where \([v]=\{0,1,\ldots,v-1\}\). A $\text{QLS}(v)$ is called  \emph{classical}  if all entries  are constrained to the computational basis \( \{|0\rangle, |1\rangle, \dots, |v-1\rangle\} \). In quantum theory, two unit vectors \( |\phi\rangle, |\psi\rangle \in \mathcal{H}_v \)  are regarded as {\it identical} ($|\phi\rangle = |\psi\rangle $) if there exists a real number \( \theta \) such that $ |\phi\rangle = e^{\mathrm{i}\theta} |\psi\rangle$; otherwise, they are considered {\it distinct} ($|\phi\rangle \neq |\psi\rangle $). The {\it cardinality} $c$ of a $\text{QLS}(v)$ is the number of distinct vectors in the array. Clearly the cardinality $c$ of a $\text{QLS}(v)$ satisfies that \( v \leq  c \leq v^2 \).   A $\text{QLS}(v)$ is called \emph{non-classical} if  $c>v$. In particular, a $\text{QLS}(v)$ with maximal cardinality implies $c=v^2$.

The study of cardinalities of quantum Latin squares has attracted significant attention.
In 2021, Paczos et al. \cite{Paczos}  proved the existence of quantum Sudoku of order $v^2$ with  maximum cardinality $c=v^4$  and deriving new mutually unbiased bases. In 2026, Zhang and Cao et al. \cite{Zhang1,Zhang3} almost solved the existence of a \(\text{QLS}(v)\) with $c=v^2$ with $11$ possible exceptions. Then Zang et al. \cite{Zang6} gave a complete solution to this problem for $v\geq 4$.  For the existence of a \(\text{QLS}(v)\) with \(v\leq c\leq v^2\), Zhang and Ji \cite{Zhang2,Zhang5} proved that the cardinality \(c=v+1\) is impossible for any \(\text{QLS}(v)\), and constructed \(\text{QLS}(4v)\text{s}\) and \(\text{QLS}(6v)\text{s}\) attaining all possible cardinalities for  \(v\geq2\). Subsequently, Zhang et al. \cite{Zhang6} completely solved the existence problem for \(\text{QLS}(v)\text{s}\) with \(v\geq8\) and \(v\notin\{9,11,23\}\), proving that every cardinality \(c\in[v,v^2]\setminus\{v+1\}\) is attainable. For further reading on quantum theory and quantum Latin squares, we recommend \cite{Claeys,Li,Mu3,Nielsen,Rather1,Rather2,Rather3,Reuttera,Zang1,Zang2,Zang3,Zang4,Zang5,Zyc}.

A {\it transversal} in a $\text{QLS}(v)$ is a set of $v$ elements, each located in a distinct row and a distinct column, that forms an orthonormal basis of $\mathcal{H}_v$. Two transversals in a $\text{QLS}(v)$ are called {\it disjoint} if they have no common cells.

\begin{definition}\label{def:t-resolvable}
Let \(t\) be a positive integer. A \(\text{QLS}(v)\) is called {\it \(t\)-resolvable}, denoted by \(t\text{-}\text{RQLS}(v)\), if it admits \(t\) sets of mutually disjoint transversals \(\mathcal{T}^{(r)}=\{T_i^{(r)}:i\in[v]\}\), \(r\in[t]\), such that \(|T_i^{(r)}\cap T_j^{(s)}|=1\) for all \(r,s\in[t]\) with
\(r\neq s\) and all \(i,j\in[v]\).
\end{definition}

Recently, Huang and Li \cite{Liyang} studied $\text{QLS}(v)$ with $v$ disjoint transversals which is actually \(1\text{-}\text{RQLS}(v)\), and pandiagonal quantum Latin square of odd order $v$ which is a
\(2\text{-}\text{RQLS}(v)\).

\begin{theorem} {\rm (\cite{Liyang})}\label{th:1.1}
 {\rm (1)} There exists a maximal-cardinality \(1\text{-}\text{RQLS}(v^2)\) for all  \(v\geq4\).\\
  {\rm (2)} There exists a maximal-cardinality \(1\text{-}\text{RQLS}(v)\) for any odd integer \(v\geq7\). \\
  {\rm (3)} For any \(v\notin\{5,7,11\}\) and \(\gcd(v,6)=1\) there exists  a maximal-cardinality \(2\text{-}\text{RQLS}(v)\).
  \end{theorem}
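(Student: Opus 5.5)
Theorem~\ref{th:1.1} is quoted from Huang and Li~\cite{Liyang}. If I had to prove it myself, I would build every part from a classical object with parallel classes of transversals and then twist it into a quantum Latin square of maximal cardinality. The twist must keep the transversals orthonormal.

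\textbf{Core construction.} Start from a classical Latin square $L$ of order $v$ whose cells are partitioned into $v$ disjoint transversals $T_0,\dots,T_{v-1}$. For (2) take $L(x,y)=x+y \pmod v$ with $T_i=\{(x,y): x-y\equiv i\}$. For (3) take $L(x,y)=x+y$ together with the two parallel classes $\{x-y\equiv i\}$ and $\{2x+y\equiv i\}$; two lines of different slopes meet exactly once, which is what Definition~\ref{def:t-resolvable} requires.

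Next replace the entries on each transversal by a basis. Fix an orthonormal basis $B_i=\{|\beta_{i,k}\rangle\}$ for each transversal $T_i$. Put $|\beta_{i,L(x,y)}\rangle$ in cell $(x,y)\in T_i$. Each transversal is then automatically an orthonormal basis.

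Row and column orthonormality need vectors from different transversals in the same row to be orthogonal. One way to get this is a block or tensor structure: write $v=ab$ and let $|\beta_{i,k}\rangle$ differ from $|k\rangle$ only inside small orthogonal subspaces, for example $2\times2$ or $3\times3$ unitary rotations acting on pairs or triples of symbols. Choose the subspaces so that the symbols paired together in a row all come from the same set of transversals. Cardinality $v^2$ then needs all $v$ bases to be pairwise distinct on every vector. I would use Fourier-type unitaries with generic phases $e^{\mathrm{i}\theta_i}$ so that no two vectors coincide up to phase.

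\textbf{Part (1).} For order $v^2$ I would take a tensor product. Use a maximal-cardinality QLS$(v)$ from \cite{Zhang1,Zang6} as one factor and a resolvable classical square as the other, so that transversals of the product are products of transversals. Orthonormality and disjointness are preserved under tensor products. Cardinality $v^4$ follows once the factor vectors are entrywise distinct and the product vectors are distinct up to phase, which I would check via the Schmidt decomposition.

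\textbf{Main obstacle.} The hardest step is to make rows and columns orthonormal and all $v^2$ vectors distinct simultaneously while keeping the transversal bases. I would first try the cyclic construction of (2) with $2\times 2$ rotations on symbol pairs $\{k,k+1\}$ along cells paired across two transversals. The exceptions $v\in\{5,7,11\}$ in (3), and the oddness requirement in (2), suggest the construction needs enough room: enough symbols or orthogonal Latin squares to place disjoint rotation blocks. Small orders would be handled, or left open, by computer search.
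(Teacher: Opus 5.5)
\textbf{There is a genuine gap: the proposal never constructs the squares, and both concrete suggestions fail.}

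Your reduction to ``one orthonormal basis $B_i$ per transversal $T_i$'' only rephrases $1$-resolvability: every $1$-RQLS$(v)$ has this form, with $B_i$ the entries lying on $T_i$. So all the content sits in the step you call the main obstacle, and there you give no construction.

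The scheme you suggest cannot work in its $2\times 2$ form. Suppose every entry is supported on at most two computational basis vectors, and the entry $u$ in cell $(x,y)$ has support $\{k,k'\}$.
\begin{itemize}
\item Row $x$ is an orthonormal basis and $|\langle k|u\rangle|<1$, so some other entry $w$ of that row has $\langle k|w\rangle\neq 0$.
\item Orthogonality to $u$ excludes $\operatorname{supp}(w)=\{k\}$ and $\operatorname{supp}(w)=\{k,j\}$ with $j\neq k'$.
\item Hence $w$ is the unique unit vector, up to phase, in $\operatorname{span}\{|k\rangle,|k'\rangle\}$ orthogonal to $u$.
\item The same argument in column $y$ puts the same state in a different cell, so the cardinality is below $v^2$.
\end{itemize}
``Generic phases'' does not repair this, and it does not by itself make rows and columns orthonormal.

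Your plan for part (1) fails for a counting reason. By Lemma~\ref{lem:abca}, the number of distinct entries of a tensor product of two squares is the product of their cardinalities. A maximal-cardinality QLS$(v)$ tensored with a classical square therefore has at most $v^2\cdot v=v^3<v^4$ distinct entries, and no Schmidt-decomposition check changes that. Making both factors maximal-cardinality, with transversals of the product being products of transversals, would need a maximal-cardinality $1$-RQLS$(v)$. No such square exists for $v\in\{4,5,6\}$ (see the discussion after Corollary~\ref{cor:MOQLS}), yet part (1) covers the orders $16$, $25$ and $36$.

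The missing idea is the complete-mapping/character construction. The paper attributes it to Huang and Li and reuses it in Construction~\ref{con:Z2Zm} and in the proof of Theorem~\ref{th:DRQLS}. Index rows, columns and the computational basis by an abelian group $G$, e.g.\ $\mathbb Z_v$ for odd $v$ or $\mathbb Z_n\times\mathbb Z_n$ for $v=n^2$. Set $|L_{i,j}\rangle=|G|^{-1/2}\sum_{t\in G}\chi_i(\mu(t))\chi_j(t)|t\rangle$ with $\mu$ a complete mapping. Character orthogonality then reduces everything to bijectivity conditions:
\begin{itemize}
\item Rows and columns are orthonormal because $\mu$ is a bijection.
\item The shifted diagonals $\{(i,i+c)\}$ are transversals because $\operatorname{id}+\mu$ is a bijection.
\item A second parallel class $\{(i,\sigma(i)+d)\}$ needs an automorphism $\sigma$ with $\mu+\sigma$ and $\sigma-\operatorname{id}$ bijective. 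This is where conditions like $\gcd(v,6)=1$ enter; for the pandiagonal choice $\sigma=-\operatorname{id}$ one needs $\mu$, $\mu+\operatorname{id}$ and $\mu-\operatorname{id}$ all bijective.
\item Maximal cardinality reduces to the condition of Lemma~\ref{lem:max-card-Z2Zm}, that $\chi_a(\mu(t))\chi_b(t)$ is nonconstant unless $a=b=0$. This forces $\mu$ to be non-affine.
\end{itemize}
That last requirement, not a lack of room for rotation blocks, is what makes small orders problematic in this approach. Note also that these entries are flat, with every coordinate of modulus $|G|^{-1/2}$, which is the opposite of your small-support perturbations of $|k\rangle$.
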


In this paper we almost solve the existence of a \(1\text{-}\text{RQLS}(v)\) with maximal cardinality based on Theorem~\ref{th:1.1} and obtain a class of \(2\text{-}\text{RQLS}(v)\) with maximal cardinality.

\begin{theorem}\label{th:1.2}
For any  integer  \(v\geq 7 \) and \( v \notin \{  10,14,18,22,26,30,34,38,42,46,54,58,62,\)\\\(66,74,82,94\} \),  there exists a \(1\text{-}\text{RQLS}(v)\)  with maximal cardinality.
\end{theorem}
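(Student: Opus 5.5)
Odd orders \(v\ge 7\) are already handled by Theorem~\ref{th:1.1}(2), so I only need even \(v\ge 8\) outside the listed exceptions. The exceptions are all \(\equiv 2 \pmod 4\). This suggests two cases: \(v\equiv 0\pmod 4\), and \(v\equiv 2 \pmod 4\) large or of special form. The tool is a product (inflation) construction. Let \(A\) be a \(1\text{-}\text{RQLS}(m)\) with maximal cardinality and \(B\) a \(1\text{-}\text{RQLS}(n)\), say a classical one coming from a Latin square with \(n\) disjoint transversals. I take the entries of the product to be \(|a_{ij}\rangle\otimes|b_{kl}\rangle\), twisted by a cell-dependent unitary on the first factor. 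Concretely, in block \((k,l)\) I use \(U_{kl}A\), with the unitaries \(U_{kl}\) chosen generic enough that vectors in different blocks sharing a second-factor symbol are distinct. Rows and columns are then orthonormal bases. Products \(T\otimes S\) of a transversal \(T\) of the twisted square and a transversal \(S\) of \(B\) give \(mn\) disjoint transversals, provided each twisted block still has the transversals of \(A\). This last condition holds automatically, since the transversals of \(U A\) are images of those of \(A\). Distinctness then yields \(c=(mn)^2\).

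\textbf{Step 1.} I establish the product lemma above in the form: a maximal-cardinality \(1\text{-}\text{RQLS}(m)\) together with a classical \(1\text{-}\text{RQLS}(n)\) gives a maximal-cardinality \(1\text{-}\text{RQLS}(mn)\). Classical \(1\text{-}\text{RQLS}(n)\) are Latin squares with an orthogonal mate. By the known existence of MOLS these exist for all \(n\ne 2,6\).

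\textbf{Step 2.} I settle \(v\equiv 0\pmod 4\). Write \(v=4m\) or \(v=8m\). Base squares of order \(4\) and \(8\) with maximal cardinality and \(v\) disjoint transversals are available: \(16=4^2\) from Theorem~\ref{th:1.1}(1), and order \(8\) I construct directly. Combining these with odd factors \(m\ge 7\) from Theorem~\ref{th:1.1}(2) via Step 1 covers most cases. Leftover small orders such as \(12, 20, 24, 28, 36, 44\) I write as products using classical factors of order \(n\in\{3,4,5,\ldots\}\), e.g. \(12=4\cdot 3\). For these I need a maximal-cardinality base of order \(4\); if that is unavailable, I would provide an explicit construction.

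\textbf{Step 3.} I settle \(v\equiv 2\pmod 4\), which I expect to be the main obstacle. Here \(v=2m\) with \(m\) odd, and the order-\(2\) and order-\(6\) ingredients fail. I would try \(v=m\cdot 2'\) where the quantum factor has odd order \(m\ge7\) and the classical factor is \(2\). This does not work, since there is no classical \(1\text{-}\text{RQLS}(2)\). I therefore reverse the roles: a maximal-cardinality factor of order \(2\) does not exist, so the product must instead use a \(1\text{-}\text{RQLS}(k)\) with \(k\equiv2\pmod4\) as a seed, times an odd classical factor. I also need a few direct seeds, or a more flexible filling-in construction in the spirit of Wilson/PBD-type recursion, to reach all \(v\equiv2\pmod4\) with \(v\ge 50\). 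The listed 17 exceptions are exactly the values where no factorization or recursion applies (\(v=2p\) with \(p\) prime, plus small cases). This is consistent with needing seeds such as \(v=50,70,78,90\), so producing those seeds by direct construction is where the effort lies.
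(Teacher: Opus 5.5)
Your Step~1 is sound. Twisting the maximal-cardinality factor by generic unitaries in each block and taking products $T\otimes S$ of transversals does give a maximal-cardinality $1\text{-}\text{RQLS}(mn)$ whenever $n\notin\{2,6\}$. It is essentially the argument the paper uses for $90=9\cdot 10$ (Lemma~\ref{lem:RQLS-90}).

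The genuine gap is Step~3, which is the heart of the theorem: products cannot carry $v\equiv 2\pmod 4$. For $v=2p$ with $p$ prime, the only nontrivial factorizations are $p\cdot 2$ and $2\cdot p$. The first needs a classical factor of order $2$, which is forbidden; the second needs a maximal-cardinality square of order $2$, which does not exist. So products of available ingredients never reach $86=2\cdot 43$, $106=2\cdot 53$, $122=2\cdot 61$, $50$, $78$, or infinitely many other orders the theorem asserts. Your claim that the $17$ exceptions are exactly the $2p$ values plus small cases is false in both directions: $18,30,42,54,66$ are exceptions but not of the form $2p$, and $86,106,\dots$ are $2p$ but not exceptions. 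Deferring all of this to an unspecified filling-in construction and unspecified seeds leaves Step~3 unproved.

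The missing idea is the singular direct product of Construction~\ref{con:Singular}, which produces order $mn+h$:
\begin{itemize}
\item Start from a classical $2\text{-}\text{RQLS}(n)$ $A$, i.e.\ three MOLS, so $n\notin\{2,3,6,10\}$ and $n\ge h$, with resolutions $\{T_p\}$ and $\{T'_q\}$.
\item Inflate the cells of $T_0,\dots,T_{n-h-1}$ by pairwise disjoint maximal-cardinality $1\text{-}\text{RQLS}(m)$s.
\item Inflate the cells of $T_{n-h+r}$ by maximal-cardinality $\text{IRQLS}(m+1,1)$s whose extra coordinate is routed to row/column $mn+r$. These come from copies of a maximal-cardinality $1\text{-}\text{RQLS}(m+1)$ sharing only $|m\rangle$, with that entry deleted.
\item Fill the $h\times h$ corner with a maximal-cardinality $1\text{-}\text{RQLS}(h)$.
\end{itemize}
The second resolution is what makes this resolvable. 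Each $T'_q$ meets every $T_{n-h+r}$ exactly once, so the sets $\mathcal R_{q,e}$ hit each extra row once and carry distinct last coordinates $|r\rangle$, hence are orthonormal. The incomplete transversals together with the corner's transversals give the remaining sets $\mathcal S_r$. Taking $m=7$ (so orders $7$ and $8$ are needed) and $h\in\{1,7,9,11,12,13,17\}$ according to $v \bmod 7$ handles all but finitely many $v\equiv 2\pmod 4$. The orders $90$ and $122=11\cdot 11+1$ are then handled separately.

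Step~2 also fails as written, because a maximal-cardinality base of order $4$ does not exist. Every $2\text{-}\text{MOQLS}(4)$ is classical, so by Lemma~\ref{lem:t-resolvable-MOQLS} every $1\text{-}\text{RQLS}(4)$ is classical; the same holds for orders $3$ and $5$. Hence $12$ and $20$ have no usable factorization at all, and order $8$ is only promised, not constructed. Granting order $8$, your products do reach every other multiple of $4$ that is at least $8$: as $8\cdot n$ with $n\notin\{1,2,6\}$, as $d\cdot 4$ with $d\ge 7$ odd, and $16$, $48=16\cdot 3$ via Theorem~\ref{th:1.1}(1). But $12$ and $20$ still need a new idea. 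The paper avoids this case analysis entirely. Construction~\ref{con:Z2Zm}, with the explicit complete mapping of Lemma~\ref{lem:complete-mapping-Z2Zm} on $\mathbb Z_2\times\mathbb Z_{2k}$, yields every $v=4k\ge 12$ at once. Example~\ref{ex:3.3} supplies $v=8$, which is also the order-$(m+1)$ ingredient needed above.
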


\begin{theorem}\label{th:DRQLS}
For any integer \(n\geq6\), there exists a  \(2\text{-}\text{RQLS}(n^2)\) with maximal cardinality.
\end{theorem}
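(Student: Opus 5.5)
We will prove Theorem~\ref{th:DRQLS} by a product construction. The base is a classical object on $[n]$ (orthogonal Latin squares), which supplies the combinatorial resolvable structure. This is then ``quantized'' by choosing, for each row of $n\times n$ blocks, independent orthonormal bases of $\mathcal{H}_n$. Concretely, identify $\mathcal{H}_{n^2}=\mathcal{H}_n\otimes\mathcal{H}_n$ and index rows and columns of the $n^2\times n^2$ square by pairs $(a,b),(c,d)\in[n]^2$.

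\textbf{Step 1: the classical skeleton.} For $n\geq 6$ (so $n\neq 2,6$ except $n=6$ itself) pick a pair of orthogonal Latin squares $L,M$ of order $n$. Define the classical square $S((a,b),(c,d))=(L(a,c),M(b,d))$, or a suitable twist of it. We want $S$ to be a Latin square of order $n^2$ carrying two orthogonal resolutions into disjoint transversals. The first resolution is indexed by a pair $(x,y)$, with cells $\{((a,b),(c,d)): c=a+x,\ d=b+y\}$. The second is indexed by $(x,y)$, with cells $\{((a,b),(c,d)): c=\sigma_a(x), d=\tau_b(y)\}$, where the permutations are chosen so that every transversal of one family meets every transversal of the other in exactly one cell. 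Working over $[n]^2$ with a product of two ``pandiagonal-like'' structures on $[n]$ is awkward when $n$ is even. So I would instead take the two resolutions to be the row-direction partition from $L$ on the first coordinate and from $M$ on the second, and then verify the intersection condition $|T_i\cap T'_j|=1$ directly from orthogonality of $L$ and $M$. This is where $n\ne 2,6$ bites, and $n=6$ will need a separate direct skeleton, perhaps using a product $2\cdot 3$ or a $3\times 12$ decomposition of $36$.

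\textbf{Step 2: quantization.} Replace the classical entry $(u,w)$ in block-row $a$ and row $(a,b)$ by $|\phi^{a}_{u}\rangle\otimes|\psi^{(a,b)}_{w}\rangle$. Here $\{|\phi^a_u\rangle\}_u$ are orthonormal bases of $\mathcal{H}_n$, chosen pairwise unbiased or at least pairwise distinct across $a$. The bases $\{|\psi^{(a,b)}_w\rangle\}_w$ are chosen generically, so that the resulting $n^4$ vectors are pairwise distinct, giving $c=n^4$. The key point is that row, column and transversal orthonormality each only uses that, within the relevant set of cells, either the first tensor factors are orthogonal or the first factors coincide and the second factors come from one basis and are orthogonal. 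So the basis for a given second factor should depend only on data constant along the fibres where the first factor repeats. I will check this fibre condition separately for rows, columns, and both transversal families.

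\textbf{Main obstacle.} The hardest step is reconciling Step 2's fibre condition with \emph{all four} structures simultaneously (rows, columns, and two resolutions) while keeping enough freedom for maximal cardinality. Too much dependence forces repeated vectors; too little breaks orthonormality on some transversal. I would first try letting the second-factor basis depend on $(a,c)$ through $L(a,c)$'s first coordinate only, and check that on each transversal of both families the first factors with equal label occur at cells sharing that dependence. If this fails, I would fall back on a two-level construction: a maximal-cardinality $\text{QLS}(n)$ from \cite{Zhang1,Zhang3,Zang6} inflated by a classical $2$-resolvable square of order $n$ on the outer level. Finally I would treat $n=6$ (and possibly small cases $n=10,14$ if the orthogonal pair is insufficient) by explicit computer-found skeletons.
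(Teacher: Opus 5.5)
Your Step 2 fails for every skeleton you could pick in Step 1. Fix a column. Its $n^2$ entries lie in $n^2$ distinct rows $(a,b)$. Their second factors $|\psi^{(a,b)}_w\rangle$ come from bases chosen generically and independently for each row, so no two of them are orthogonal. The inner product $\langle\phi|\phi'\rangle\langle\psi|\psi'\rangle$ can then vanish only if the first factors are orthogonal. That would require $n^2$ mutually orthogonal unit vectors in $\mathcal H_n$, which is impossible. With unbiased $\phi^a$ it fails even earlier, since first factors from different block-rows have overlap of modulus $1/\sqrt n$. The same count rules out every transversal of both families, because a transversal also meets each row exactly once.

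Your own fibre condition shows what is really needed. Along every column fibre and every fibre of both resolutions, second factors coming from different rows must be mutually orthogonal. So the second-factor data must form quantum Latin squares with additional cross-orthogonality relations, not generic bases. Maximal cardinality then needs an explicit argument rather than an appeal to genericity. This is exactly the ``main obstacle'' you name, and the proposal does not resolve it.

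The fallback does not escape the problem. Inflating an outer classical $2$-RQLS$(n)$ requires three MOLS$(n)$, which do not exist for $n=6$ and are not known for $n=10$. More seriously, in the standard inflation argument the intersection of two product transversals reduces to the intersection of two inner transversals inside the single block $T_g\cap T'_q$. The inner squares would therefore themselves have to be maximal-cardinality $2$-RQLS$(n)$s, which is circular. Step 1 is also never pinned down: the second resolution and the property $|T_i\cap T'_j|=1$ are left as ``a suitable twist''. The case $n=6$, which the theorem includes, is deferred to an unspecified computer search.

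The missing idea is the paper's character construction over $G=\mathbb Z_n\times\mathbb Z_n$. It sets $|L_{x,y}\rangle=\frac1n\sum_{t\in G}\chi_x(\mu(t))\chi_y(t)|t\rangle$ with the complete mapping $\mu(t_1,t_2)=(g(t_2),t_1+f(t_2))$, where $g=(n-2\ \ n-1)$ and $f=(2\ \ 3)(n-2\ \ n-1)$. Each required property is a single bijectivity check:
\begin{itemize}
\item rows are orthonormal by character orthogonality, and columns because $\mu$ is a bijection;
\item the translates $T_c$ are transversals because $\operatorname{id}+\mu$ is a bijection;
\item the sets $S_d=\{(x,\sigma(x)+d)\}$ are transversals because $\mu+\sigma$ is a bijection, where $\sigma(t_1,t_2)=(t_1-t_2,-t_1)$ satisfies $\chi_{\sigma(x)}=\chi_x\circ\sigma$;
\item $|T_c\cap S_d|=1$ because $\sigma-\operatorname{id}$ is a bijection.
\end{itemize}
Maximal cardinality follows by evaluating $\chi_a(\mu(t))\chi_b(t)$ at a few points, where the transpositions break linearity. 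These maps exist on $\mathbb Z_n^2$ for every $n$, so no MOLS of order $n$ and no special treatment of $n=6$ are needed.

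Your product instinct is not wrong in itself: the paper's entries actually factor as
$\bigl(\frac1{\sqrt n}\sum_{s}\zeta^{(i_2+j_1)s}|s\rangle\bigr)\otimes\bigl(\frac1{\sqrt n}\sum_{s}\zeta^{i_1g(s)+i_2f(s)+j_2s}|s\rangle\bigr)$.
The second factors, however, are rigidly structured. For each fixed $i_2$ they form a QLS$(n)$ in $(i_1,j_2)$. Orthogonality across different $i_2$ on transversal fibres comes from $s\mapsto s+f(s)-g(s)$ and $f$ being permutations. Generic per-row bases cannot supply this.
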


\section{\(t\text{-}\text{RQLS}(v)\) and \((t+1)\text{-}\text{MOQLS}(v)\)}

In this section, we establish the equivalence between \(t\text{-}\text{RQLS}(v)\text{s}\) and a set of \(t+1\) mutually orthogonal \(\text{QLS}(v)\text{s}\). Moreover, we provide an example of a maximal-cardinality \(1\text{-}\text{RQLS}(8)\).

\begin{definition}
Two $\text{QLS}(v)\text{s}$  $A=(|a_{i,j}\rangle)$ and $B=(|b_{i,j}\rangle)$ are orthogonal if the set $\{ |a_{i,j}\rangle\otimes |b_{i,j}\rangle : i,j \in [v]\}$  forms an orthonormal basis of $\mathcal{H}_v\otimes\mathcal{H}_v$, i.e.,
\[
\left( |a_{i,j}\rangle\otimes |b_{i,j}\rangle, |a_{i',j'}\rangle\otimes |b_{i',j'}\rangle \right)
=
(|a_{i,j}\rangle, |a_{i',j'}\rangle)
(|b_{i,j}\rangle, |b_{i',j'}\rangle)
=
\delta_{i,i'}\delta_{j,j'},
\]
for \(i,j,i',j'\in [v]\).
\end{definition}

A set of \(t\) mutually orthogonal \(\text{QLS}(v)\text{s}\) is denoted by \(t\text{-}\text{MOQLS}(v)\). If at least one of them is non-classical, then the \(t\text{-}\text{MOQLS}(v)\) is called non-classical.

\begin{lemma}\label{lem:t-resolvable-MOQLS}
\(A\) is a \(t\text{-}\text{RQLS}(v)\) if and only if there exist \(t\) classical \(\text{QLS}(v)\text{s}\), \(B_0,\ldots,B_{t-1}\), such that \(A,B_0,\ldots,B_{t-1}\) form a \((t+1)\text{-}\text{MOQLS}(v)\).
\end{lemma}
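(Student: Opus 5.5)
The plan is to translate both directions through the dictionary ``transversal class $\mathcal{T}^{(r)}$ $\leftrightarrow$ classical square $B_r$'', where $B_r$ has the entry $|i\rangle$ in every cell of $T_i^{(r)}$. The main preliminary step is a clean orthogonality criterion when one square is classical. If $B=(|b_{x,y}\rangle)$ is classical and $A=(|a_{x,y}\rangle)$ is any $\text{QLS}(v)$, then $(|a_{x,y}\rangle,|a_{x',y'}\rangle)(|b_{x,y}\rangle,|b_{x',y'}\rangle)$ vanishes automatically whenever $b_{x,y}\neq b_{x',y'}$. When $b_{x,y}=b_{x',y'}$, the product equals $(|a_{x,y}\rangle,|a_{x',y'}\rangle)$. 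Hence $A\perp B$ if and only if, for each symbol $i$, the $v$ vectors of $A$ in the cells $B^{-1}(i)$ are pairwise orthogonal, i.e.\ form an orthonormal basis. Since $B$ is a Latin square, each $B^{-1}(i)$ meets every row and column exactly once, so this says exactly that each $B^{-1}(i)$ is a transversal of $A$.

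\emph{Forward direction.} Given the $t$ resolutions, define $B_r$ by putting $|i\rangle$ in the cells of $T_i^{(r)}$. This is well defined and fills the square because the $T_i^{(r)}$, $i\in[v]$, are $v$ disjoint sets of $v$ cells. Each row and column contains exactly one cell of each $T_i^{(r)}$, so $B_r$ is a classical $\text{QLS}(v)$. By the criterion, $A\perp B_r$. For $r\neq s$, the condition $|T_i^{(r)}\cap T_j^{(s)}|=1$ says each pair $(|i\rangle,|j\rangle)$ occurs exactly once in $(B_r,B_s)$. So $\{|b^{(r)}_{x,y}\rangle\otimes|b^{(s)}_{x,y}\rangle\}$ is the full computational basis of $\mathcal{H}_v\otimes\mathcal{H}_v$, i.e.\ $B_r\perp B_s$.

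\emph{Converse.} Given $A,B_0,\dots,B_{t-1}$ mutually orthogonal with the $B_r$ classical, set $T_i^{(r)}=B_r^{-1}(|i\rangle)$. By the criterion each $T_i^{(r)}$ is a transversal of $A$, and for fixed $r$ they are disjoint. For $r\neq s$, orthogonality of two classical squares means the $v^2$ tensor products $|b^{(r)}_{x,y}\rangle\otimes|b^{(s)}_{x,y}\rangle$ are orthonormal. Such products of computational basis vectors are orthogonal precisely when the symbol pairs differ, so all $v^2$ ordered pairs appear exactly once. This gives $|T_i^{(r)}\cap T_j^{(s)}|=1$.

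The only delicate point is the criterion's handling of cells with equal $B$-symbols. Here one must note that identical vectors are taken up to phase, and that orthogonality within a row or column of $A$ is already given. Beyond that, the proof is bookkeeping and I expect no real obstacle.
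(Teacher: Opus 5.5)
Your proof is correct and follows the paper's argument. It uses the same dictionary $T_i^{(r)}\leftrightarrow B_r^{-1}(|i\rangle)$, and the same observation that cells with different $B_r$-symbols are automatically orthogonal while cells with equal symbols force orthogonality of the corresponding vectors of $A$. It also uses the same ordered-pair counting to get $B_r\perp B_s$ and $|T_i^{(r)}\cap T_j^{(s)}|=1$.

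The differences are only organizational. You state the $A\perp B$ step once as a biconditional criterion and use it in both directions, while the paper argues each direction separately. Your remark about phases on classical entries is a small extra piece of care that the paper glosses over.
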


\begin{proof}
Suppose first that \(A\) is a \(t\text{-}\text{RQLS}(v)\). Let \(\mathcal{T}^{(r)}=\{T_k^{(r)}:k\in[v]\}\), \(r\in[t]\), be the corresponding \(t\) sets of mutually disjoint transversals. For each \(r\in[t]\), define a classical square \(B_r=(|b_{i,j}^{(r)}\rangle)\) by \(|b_{i,j}^{(r)}\rangle=|k\rangle\) whenever
\((i,j)\in T_k^{(r)}\).

Since the \(v\) transversals in \(\mathcal{T}^{(r)}\) are mutually disjoint, they partition all \(v^2\) cells. Moreover, each \(T_k^{(r)}\) contains exactly one cell in every row and every column. Hence \(B_r\) is a classical \(\text{QLS}(v)\).

We next show that \(A,B_0,\ldots,B_{t-1}\) are mutually orthogonal. Fix \(r\in[t]\). For two distinct cells, if they belong to the same \(T_k^{(r)}\), then the corresponding vectors of \(A\) are orthogonal; if they belong to distinct transversals of \(\mathcal{T}^{(r)}\), then the corresponding vectors of \(B_r\) are orthogonal. Moreover, the tensor product corresponding to each cell is a unit vector. Hence \(A\) and \(B_r\) are orthogonal.

Let \(r,s\in[t]\) with \(r\neq s\). Since \(|T_k^{(r)}\cap T_l^{(s)}|=1\) for all \(k,l\in[v]\), for each \(k,l\in[v]\), there is exactly one cell \((i,j)\) such that \(|b_{i,j}^{(r)}\rangle=|k\rangle\) and \(|b_{i,j}^{(s)}\rangle=|l\rangle\). Thus
\[
\{|b_{i,j}^{(r)}\rangle\otimes|b_{i,j}^{(s)}\rangle:i,j\in[v]\}
=
\{|k\rangle\otimes|l\rangle:k, l\in[v]\},
\]
which is an orthonormal basis of \(\mathcal H_v\otimes\mathcal H_v\). Hence \(B_r\) and \(B_s\) are orthogonal. Therefore \(A,B_0,\ldots,B_{t-1}\) form a
\((t+1)\text{-}\text{MOQLS}(v)\).

Conversely, suppose that \(A,B_0,\ldots,B_{t-1}\) form a \((t+1)\text{-}\text{MOQLS}(v)\), where \(B_0,\ldots,B_{t-1}\) are classical.  For each \(r\in[t]\) and \(k\in[v]\), define \(T_k^{(r)}=\{(i,j):|b_{i,j}^{(r)}\rangle=|k\rangle\}\). Since \(B_r\) is classical, each \(T_k^{(r)}\) contains exactly one cell
in every row and every column, and the sets \(T_k^{(r)}\), \(k\in[v]\),  are mutually disjoint and partition all \(v^2\) cells.

Let \((i,j)\) and \((i',j')\) be two distinct cells in \(T_k^{(r)}\). Then \((|b_{i,j}^{(r)}\rangle,|b_{i',j'}^{(r)}\rangle)=1\). Since \(A\) and \(B_r\) are orthogonal, it follows that \((|a_{i,j}\rangle,|a_{i',j'}\rangle)=0\). Thus the \(v\) vectors of \(A\) corresponding to the cells of \(T_k^{(r)}\) form an orthonormal basis of \(\mathcal H_v\), and hence \(T_k^{(r)}\) is a transversal of \(A\). Therefore, for each fixed \(r\in[t]\), the sets \(\{T_k^{(r)}:k\in[v]\}\) form a set of \(v\) mutually disjoint transversals of \(A\).

Finally, let \(r,s\in[t]\) with \(r\neq s\). Since the classical \(\text{QLS}(v)\text{s}\) \(B_r\) and \(B_s\) are orthogonal, each ordered pair \((|k\rangle,|l\rangle)\), \(k, l\in[v]\), occurs in exactly one cell. Therefore \(|T_k^{(r)}\cap T_l^{(s)}|=1\) for all \(k,l\in[v]\). Hence \(A\) is a \(t\text{-}\text{RQLS}(v)\).
\end{proof}

It is known from \cite{Colbourn} that there exist two orthogonal Latin squares of order \(v\) for every positive integer \(v\notin\{2,6\}\), and three mutually orthogonal Latin squares of order \(v\) for every positive integer \(v\notin\{2,3,6,10\}\). Therefore, by Lemma~\ref{lem:t-resolvable-MOQLS}, there exists a classical \(1\text{-}\text{RQLS}(v)\) for every \(v\notin\{2,6\}\), and a classical \(2\text{-}\text{RQLS}(v)\) for every \(v\notin\{2,3,6,10\}\). In this paper we focus on non-classical mutually orthogonal quantum Latin squares. By Lemma~\ref{lem:t-resolvable-MOQLS} we have the following conclusion.

\begin{corollary}\label{cor:MOQLS}
If there exists a maximal-cardinality \(t\text{-}\text{RQLS}(v)\), then there exists a non-classical \((t+1)\text{-}\text{MOQLS}(v)\).
\end{corollary}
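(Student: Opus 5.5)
The corollary follows from Lemma~\ref{lem:t-resolvable-MOQLS} once we check that maximal cardinality forces \(A\) to be non-classical. So the plan has two steps.

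First, let \(A\) be a maximal-cardinality \(t\text{-}\text{RQLS}(v)\). Apply the forward direction of Lemma~\ref{lem:t-resolvable-MOQLS} to obtain classical \(\text{QLS}(v)\text{s}\) \(B_0,\ldots,B_{t-1}\) such that \(A,B_0,\ldots,B_{t-1}\) form a \((t+1)\text{-}\text{MOQLS}(v)\). Each \(B_r\) is built by putting \(|k\rangle\) in every cell of the transversal \(T_k^{(r)}\).

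Second, show that \(A\) is non-classical, i.e. \(c>v\). Maximal cardinality means \(c=v^2\), so we need \(v^2>v\), which holds exactly when \(v\geq 2\). By the definition of non-classical \(\text{MOQLS}\), it then suffices that one member of the family, namely \(A\), is non-classical. This completes the argument for \(v\geq2\).

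The only real obstacle is the degenerate case \(v=1\). There \(c=v^2=v=1\), so a "maximal-cardinality" square is trivially classical and the conclusion would be false. I would handle this by stating the implicit assumption \(v\geq2\), which is the range of interest in the paper: all existence results concern \(v\geq 4\), and \(2\times2\) squares carry no meaningful content here. With that assumption, the proof is a direct composition of Lemma~\ref{lem:t-resolvable-MOQLS} with the inequality \(v^2>v\).
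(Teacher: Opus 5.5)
Your proof is correct and matches the paper's, which simply derives the corollary from Lemma~\ref{lem:t-resolvable-MOQLS}, leaving implicit that cardinality \(v^2>v\) makes \(A\) non-classical. Your remark that the statement fails at \(v=1\) (where \(v^2=v\)) is a valid edge case that the paper does not state.
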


Non-classical mutually orthogonal quantum Latin squares have been studied from several perspectives. Han et al.~\cite{Han} developed PBD and filling-in-holes constructions from combinatorial design theory and obtained several existence results for non-classical \(2\)- and \(3\text{-}\text{MOQLS}(v)\text{s}\). More recently, Ball and Simoens \cite{Ball2} investigated the maximum size of a non-classical set of mutually orthogonal quantum Latin squares. They proved that every set of \((v-2)\text{-}\text{MOQLS}(v)\text{s}\) is necessarily classical and constructed large non-classical sets for prime-power orders.

For small orders, Paczos et al.~\cite{Paczos} proved that the only possible cardinalities of \(\text{QLS}(2)\) and \(\text{QLS}(3)\) are \(2\) and \(3\), respectively. Since a \(\text{QLS}(2)\) of cardinality \(2\) contains no transversal, no \(1\text{-}\text{RQLS}(2)\) exists, whereas every \(1\text{-}\text{RQLS}(3)\) has cardinality \(3\). Ball and Simoens \cite{Ball} proved that \(2\text{-}\text{MOQLS}(v)\text{s}\) are necessarily classical for \(v\in\{4,5\}\), whereas no \(2\text{-}\text{MOQLS}(6)\text{s}\) exist. Therefore, by Lemma~\ref{lem:t-resolvable-MOQLS}, every \(1\text{-}\text{RQLS}(v)\) is classical for \(v\in\{4,5\}\) and hence has cardinality \(v\), while no \(1\text{-}\text{RQLS}(6)\) exists.

\begin{example} \label{ex:3.3}
We present a $1\text{-}\text{RQLS}(8)$ with maximal cardinality $c=64$, where $\omega=e^{\frac{\pi \mathrm{i}}{4}}$ is a primitive $8$-th root of unity.

\begin{center}

\scriptsize{
\begin{tabular}{|>{\centering\arraybackslash}*{8}{>{\centering\arraybackslash}m{1.4cm}|}}
\hline
\qcell{\ket{0}} & \qcell{\ket{1}} & \qcell{\ket{2}} & \qcell{\ket{3}} & \qcell{\ket{4}} & \qcell{\ket{5}} & \qcell{\ket{6}} & \qcell{\ket{7}} \\ \hline

\qcell{\tfrac{1}{\sqrt2}(\ket{2}\\+\omega^{3}\ket{3})} &
\qcell{\tfrac{1}{\sqrt2}(\ket{2}\\+\omega^{7}\ket{3})} &
\qcell{\tfrac{1}{\sqrt2}(\ket{0}\\+\omega^{6}\ket{1})} &
\qcell{\tfrac{1}{\sqrt2}(\ket{0}\\+\omega^{2}\ket{1})} &
\qcell{\tfrac{1}{\sqrt2}(\ket{6}\\+\omega^{4}\ket{7})} &
\qcell{\tfrac{1}{\sqrt2}(\ket{6}\\+\ket{7})} &
\qcell{\tfrac{1}{\sqrt2}(\ket{4}\\+\omega^{6}\ket{5})} &
\qcell{\tfrac{1}{\sqrt2}(\ket{4}\\+\omega^{2}\ket{5})} \\ \hline

\qcell{\tfrac12(\ket{4}\\ +\ket{5}\\+\omega^{7}\ket{6}\\+\omega^{5}\ket{7})} &
\qcell{\tfrac12(\ket{4}\\+\ket{5}\\ +\omega^{3}\ket{6}\\+\omega\ket{7})} &
\qcell{\tfrac12(\ket{4}\\+\omega^{4}\ket{5}\\ +\omega^{3}\ket{6}\\+\omega^{5}\ket{7})} &
\qcell{\tfrac12(\ket{4}\\+\omega^{4}\ket{5}\\ +\omega^{7}\ket{6}\\+\omega\ket{7})} &
\qcell{\tfrac12(\ket{0}\\+\ket{1}\\ +\omega^{2}\ket{2}\\+\omega^{7}\ket{3})} &
\qcell{\tfrac12(\ket{0}\\+\ket{1}\\ +\omega^{6}\ket{2}\\+\omega^{3}\ket{3})} &
\qcell{\tfrac12(\ket{0}\\+\omega^{4}\ket{1}\\ +\omega^{6}\ket{2}\\+\omega^{7}\ket{3})} &
\qcell{\tfrac12(\ket{0}\\+\omega^{4}\ket{1}\\ +\omega^{2}\ket{2}\\+\omega^{3}\ket{3})} \\ \hline

\qcell{\tfrac12(\ket{4}\\+\omega^{4}\ket{5}\\ +\omega\ket{6}\\+\omega^{3}\ket{7})} &
\qcell{\tfrac12(\ket{4}\\+\omega^{4}\ket{5}\\ +\omega^{5}\ket{6}\\+\omega^{7}\ket{7})} &
\qcell{\tfrac12(\ket{4}\\+\ket{5}\\ +\omega^{5}\ket{6}\\+\omega^{3}\ket{7})} &
\qcell{\tfrac12(\ket{4}\\+\ket{5}\\ +\omega\ket{6}\\+\omega^{7}\ket{7})} &
\qcell{\tfrac12(\ket{0}\\+\omega^{4}\ket{1}\\ +\omega^{4}\ket{2}\\+\omega^{5}\ket{3})} &
\qcell{\tfrac12(\ket{0}\\+\omega^{4}\ket{1}\\ +\ket{2}\\+\omega\ket{3})} &
\qcell{\tfrac12(\ket{0}\\+\ket{1}\\ +\ket{2}\\+\omega^{5}\ket{3})} &
\qcell{\tfrac12(\ket{0}\\+\ket{1}\\ +\omega^{4}\ket{2}\\+\omega\ket{3})} \\ \hline

\qcell{\tfrac12(\sqrt2\ket{1}\\+\omega^{7}\ket{2}\\ +\omega^{6}\ket{3})} &
\qcell{\tfrac12(\sqrt2\ket{0}\\+\omega^{5}\ket{2}\\ +\ket{3})} &
\qcell{\tfrac12(\ket{0}\\+\omega^{2}\ket{1}\\ +\sqrt2\omega^{4}\ket{3})} &
\qcell{\tfrac12(\ket{0}\\+\omega^{6}\ket{1}\\ +\sqrt2\omega\ket{2})} &
\qcell{\tfrac12(\sqrt2\ket{5}\\+\omega^{4}\ket{6}\\ +\omega^{4}\ket{7})} &
\qcell{\tfrac12(\sqrt2\ket{4}\\+\omega^{2}\ket{6}\\ +\omega^{6}\ket{7})} &
\qcell{\tfrac12(\ket{4}\\+\omega^{2}\ket{5}\\ +\sqrt2\omega^{2}\ket{7})} &
\qcell{\tfrac12(\ket{4}\\+\omega^{6}\ket{5}\\ +\sqrt2\omega^{6}\ket{6})} \\ \hline

\qcell{\tfrac12(\sqrt2\ket{1}\\+\omega^{3}\ket{2}\\ +\omega^{2}\ket{3})} &
\qcell{\tfrac12(\sqrt2\ket{0}\\+\omega\ket{2}\\ +\omega^{4}\ket{3})} &
\qcell{\tfrac12(\ket{0}\\+\omega^{2}\ket{1}\\ +\sqrt2\ket{3})} &
\qcell{\tfrac12(\ket{0}\\+\omega^{6}\ket{1}\\ +\sqrt2\omega^{5}\ket{2})} &
\qcell{\tfrac12(\sqrt2\ket{5}\\+\ket{6}\\ +\ket{7})} &
\qcell{\tfrac12(\sqrt2\ket{4}\\+\omega^{6}\ket{6}\\ +\omega^{2}\ket{7})} &
\qcell{\tfrac12(\ket{4}\\+\omega^{2}\ket{5}\\ +\sqrt2\omega^{6}\ket{7})} &
\qcell{\tfrac12(\ket{4}\\+\omega^{6}\ket{5}\\ +\sqrt2\omega^{2}\ket{6})} \\ \hline

\qcell{\tfrac12(\ket{4}\\+\omega^{6}\ket{5}\\ +\sqrt2\ket{7})} &
\qcell{\tfrac12(\ket{4}\\+\omega^{2}\ket{5}\\ +\sqrt2\ket{6})} &
\qcell{\tfrac12(\sqrt2\ket{5}\\+\omega^{2}\ket{6}\\ +\omega^{6}\ket{7})} &
\qcell{\tfrac12(\sqrt2\ket{4}\\+\omega^{4}\ket{6}\\ +\omega^{4}\ket{7})} &
\qcell{\tfrac12(\ket{0}\\+\omega^{6}\ket{1}\\ +\sqrt2\omega^{2}\ket{3})} &
\qcell{\tfrac12(\ket{0}\\+\omega^{2}\ket{1}\\ +\sqrt2\omega^{3}\ket{2})} &
\qcell{\tfrac12(\sqrt2\ket{1}\\+\omega^{5}\ket{2}\\ +\ket{3})} &
\qcell{\tfrac12(\sqrt2\ket{0}\\+\omega^{7}\ket{2}\\ +\omega^{6}\ket{3})} \\ \hline

\qcell{\tfrac12(\ket{4}\\+\omega^{2}\ket{5}\\ +\sqrt2\omega^{4}\ket{6})} &
\qcell{\tfrac12(\ket{4}\\+\omega^{6}\ket{5}\\ +\sqrt2\omega^{4}\ket{7})} &
\qcell{\tfrac12(\sqrt2\ket{4}\\+\ket{6}\\ +\ket{7})} &
\qcell{\tfrac12(\sqrt2\ket{5}\\+\omega^{6}\ket{6}\\ +\omega^{2}\ket{7})} &
\qcell{\tfrac12(\ket{0}\\+\omega^{2}\ket{1}\\ +\sqrt2\omega^{7}\ket{2})} &
\qcell{\tfrac12(\ket{0}\\+\omega^{6}\ket{1}\\ +\sqrt2\omega^{6}\ket{3})} &
\qcell{\tfrac12(\sqrt2\ket{0}\\+\omega^{3}\ket{2}\\ +\omega^{2}\ket{3})} &
\qcell{\tfrac12(\sqrt2\ket{1}\\+\omega\ket{2}\\ +\omega^{4}\ket{3})} \\ \hline
\end{tabular}}
\end{center}
The following are eight mutually disjoint transversals of this \(1\text{-}\text{RQLS}(8)\):
\[
\begin{aligned}
T_0={}&\{(0,0),(1,1),(2,2),(3,3),(4,4),(5,5),(6,6),(7,7)\},\\
T_1={}&\{(0,1),(1,0),(2,3),(3,2),(4,5),(5,4),(6,7),(7,6)\},\\
T_2={}&\{(0,2),(1,3),(2,0),(3,1),(4,6),(5,7),(6,4),(7,5)\},\\
T_3={}&\{(0,3),(1,2),(2,1),(3,0),(4,7),(5,6),(6,5),(7,4)\},\\
T_4={}&\{(0,4),(1,5),(2,6),(3,7),(4,0),(5,1),(6,2),(7,3)\},\\
T_5={}&\{(0,5),(1,4),(2,7),(3,6),(4,1),(5,0),(6,3),(7,2)\},\\
T_6={}&\{(0,6),(1,7),(2,4),(3,5),(4,2),(5,3),(6,0),(7,1)\},\\
T_7={}&\{(0,7),(1,6),(2,5),(3,4),(4,3),(5,2),(6,1),(7,0)\}.
\end{aligned}
\]
\end{example}

\section{Complete Mapping  Constructions}

Complete mappings are classical tools in combinatorial design theory and have been extensively used in the construction of Latin squares and related combinatorial structures \cite{Colbourn}.  Recently, Huang and Li \cite{Liyang} introduced complete mapping techniques into the study of quantum Latin squares. Ball and Simoens \cite{Ball2} extended this line of work by developing a corresponding construction over Frobenius rings. In this section, we employ complete mappings to construct maximal-cardinality
\(1\text{-}\text{RQLS}(v)\text{s}\) and \(2\text{-}\text{RQLS}(v)\text{s}\).

\begin{definition}
Let $(G,+)$ be a finite group, and let $\operatorname{id}$ denote the identity permutation of $G$. A permutation $\mu$ of $G$ is called a complete mapping of $G$ if $\operatorname{id}+\mu$ is also a permutation of $G$, where
$(\operatorname{id}+\mu)(t)=t+\mu(t)$ for any $t\in G$.
\end{definition}

\begin{definition}
Let \((G,+)\) be a finite abelian group. A {\it character} of \(G\) is a map \(\chi:G\rightarrow\mathbb C\) satisfying \(|\chi(t)|=1\) for all \(t\in G\) and
$\chi(t+s)=\chi(t)\chi(s)$ for all  $t,s\in G$.
\end{definition}

\subsection{Constructions of \(1\text{-}\text{RQLS}(v)\text{s}\)}

Let \(v=2m\) and set \(G=\mathbb{Z}_2\times\mathbb{Z}_m\). Let \(\omega=e^{\frac{2\pi \mathrm{i}}{m}}\) be a primitive \(m\)-th root of unity. For each \((a,b)\in G\), define \(\chi_{a,b}:G\rightarrow \mathbb{C}\) by
\[
\chi_{a,b}(t_1,t_2)=(-1)^{a t_1}\omega^{b t_2}, \qquad (t_1,t_2)\in G.
\]
Then \(\chi_{a,b}\), \((a,b)\in G\), are precisely all the characters of \(G\). Moreover, for any \((a,b),(a',b')\in G\),
\[
\overline{\chi_{a,b}(t_1,t_2)}\chi_{a',b'}(t_1,t_2)=\chi_{a'-a,\;b'-b}(t_1,t_2), \qquad (t_1,t_2)\in G.
\]
In particular,
\[
\sum_{(t_1,t_2)\in G}\chi_{a,b}(t_1,t_2)=
\begin{cases}
2m, & \text{if } (a,b)=(0,0),\\
0, & \text{otherwise}.
\end{cases}
\]

For each \(t\in[v]\), write uniquely \(t=mt_1+t_2\), where \(t_1\in\{0,1\}\) and \(t_2\in\{0,1,\ldots,m-1\}\), and identify \(t\) with the corresponding element \((t_1,t_2)\in G\).  Thus, the rows and columns of the array, as well as the computational basis of \(\mathcal H_v\), are indexed by the elements of \(G\).

\begin{construction}\label{con:Z2Zm} Let \(v=2m\) and $G=\mathbb{Z}_2\times\mathbb{Z}_m$. Let $\mu:G\rightarrow G$ be a complete mapping of $G$. Define an array $$L=\bigl(|L_{(i_1,i_2),(j_1,j_2)}\rangle\bigr)_{(i_1,i_2),(j_1,j_2)\in G}$$ by
\[
|L_{(i_1,i_2),(j_1,j_2)}\rangle
=
\frac{1}{\sqrt{2m}}
\sum_{(t_1,t_2)\in G}
\chi_{i_1,i_2}\bigl(\mu(t_1,t_2)\bigr)
\chi_{j_1,j_2}(t_1,t_2)
|(t_1,t_2)\rangle.
\]
Then $L$ is a $1\text{-}\text{RQLS}(v)$.
\end{construction}

\begin{proof}
We first prove that \(L\) is a \(\text{QLS}(v)\).  For a fixed row \((i_1,i_2)\in G\) and any two columns \((j_1,j_2),(j'_1,j'_2)\in G\), we have
\[
\begin{aligned}
&\left( |L_{(i_1,i_2),(j_1,j_2)}\rangle~,~|L_{(i_1,i_2),(j'_1,j'_2)}\rangle \right)  \\
&=
\frac{1}{2m}\sum_{(t_1,t_2)\in G}
\overline{
\chi_{i_1,i_2}\bigl(\mu(t_1,t_2)\bigr)
\chi_{j_1,j_2}(t_1,t_2)
}
\chi_{i_1,i_2}\bigl(\mu(t_1,t_2)\bigr)
\chi_{j'_1,j'_2}(t_1,t_2) \\
&=
\frac{1}{2m}\sum_{(t_1,t_2)\in G}
\overline{\chi_{j_1,j_2}(t_1,t_2)}
\chi_{j'_1,j'_2}(t_1,t_2) \\
&=
\frac{1}{2m}\sum_{(t_1,t_2)\in G}
\chi_{j'_1-j_1,\;j'_2-j_2}(t_1,t_2).
\end{aligned}
\]
By the character-sum property, the last expression is  \(1\) when \((j_1,j_2)=(j'_1,j'_2)\), and \(0\) otherwise. Hence every row of \(L\) is an orthonormal basis of \(\mathcal H_v\).

Similarly, for a fixed column \((j_1,j_2)\in G\) and any two rows \((i_1,i_2),(i'_1,i'_2)\in G\),
\[
\begin{aligned}
&\left( |L_{(i_1,i_2),(j_1,j_2)}\rangle,
|L_{(i'_1,i'_2),(j_1,j_2)}\rangle \right)  \\
&=
\frac{1}{2m}\sum_{(t_1,t_2)\in G}
\overline{
\chi_{i_1,i_2}\bigl(\mu(t_1,t_2)\bigr)
\chi_{j_1,j_2}(t_1,t_2)
}
\chi_{i'_1,i'_2}\bigl(\mu(t_1,t_2)\bigr)
\chi_{j_1,j_2}(t_1,t_2) \\
&=
\frac{1}{2m}\sum_{(t_1,t_2)\in G}
\overline{\chi_{i_1,i_2}\bigl(\mu(t_1,t_2)\bigr)}
\chi_{i'_1,i'_2}\bigl(\mu(t_1,t_2)\bigr) \\
&=
\frac{1}{2m}\sum_{(t_1,t_2)\in G}
\chi_{i'_1-i_1,\;i'_2-i_2}
\bigl(\mu(t_1,t_2)\bigr).
\end{aligned}
\]
Since \(\mu\) is a permutation of \(G\), the character-sum property implies that the above inner product is   \(1\) when \((i_1,i_2)=(i'_1,i'_2)\), and \(0\) otherwise. Hence every column of \(L\) is an orthonormal basis of \(\mathcal H_v\). Therefore \(L\) is a \(\text{QLS}(v)\).

It remains to prove that \(L\) is \(1\)-resolvable. For each \((c_1,c_2)\in G\), define
\[
T_{(c_1,c_2)}=\left\{\bigl((i_1,i_2),(i_1+c_1,i_2+c_2)\bigr):(i_1,i_2)\in G\right\},
\]
where the additions are taken in \(G=\mathbb Z_2\times\mathbb Z_m\). Each \(T_{(c_1,c_2)}\) contains exactly one cell in every row and every column. Moreover, the sets \(T_{(c_1,c_2)}\), \((c_1,c_2)\in G\), are mutually disjoint and partition all cells of \(L\).

We next show that each \(T_{(c_1,c_2)}\) is a transversal. Fix \((c_1,c_2)\in G\). For \((i_1,i_2),(i'_1,i'_2)\in G\),
\[
\begin{aligned}
&\left( |L_{(i_1,i_2),(i_1+c_1,i_2+c_2)}\rangle~,~ |L_{(i'_1,i'_2),(i'_1+c_1,i'_2+c_2)}\rangle \right)  \\
&=
\frac{1}{2m}\sum_{(t_1,t_2)\in G}
\chi_{i'_1-i_1,\;i'_2-i_2}
\bigl(\mu(t_1,t_2)\bigr)
\chi_{i'_1-i_1,\;i'_2-i_2}(t_1,t_2) \\
&=
\frac{1}{2m}
\sum_{(t_1,t_2)\in G}
\chi_{i'_1-i_1,\;i'_2-i_2}
\bigl(\mu(t_1,t_2)+(t_1,t_2)\bigr),
\end{aligned}
\]
where the addition is taken in \(G\). Since \(\mu\) is a complete mapping of \(G\), \(\mu(t_1,t_2)+(t_1,t_2)\) also ranges over all elements of \(G\) as \((t_1,t_2)\) ranges over \(G\). Then the above inner product is  \(1\) when \((i_1,i_2)=(i'_1,i'_2)\), and  \(0\) otherwise. Therefore the vectors corresponding to the cells in \(T_{(c_1,c_2)}\) form an orthonormal basis of \(\mathcal H_v\), and hence \(T_{(c_1,c_2)}\) is a transversal.

Thus $\{T_{(c_1,c_2)}:(c_1,c_2)\in G\}$ is a set of \(v=|G|=2m\) mutually disjoint transversals of \(L\). Therefore \(L\) is a \(1\text{-}\text{RQLS}(v)\).
\end{proof}

\begin{lemma}\label{lem:max-card-Z2Zm}
Let \(L\) be the array defined in Construction~\ref{con:Z2Zm}. Suppose that \(\mu\) satisfies the following condition: for any
\((a_1,a_2),(b_1,b_2)\in G\),
$
\chi_{a_1,a_2}
\bigl(\mu(t_1,t_2)\bigr)
\chi_{b_1,b_2}(t_1,t_2)
$
is constant on \(G\) only when \((a_1,a_2)=(b_1,b_2)=(0,0)\). Then \(L\) has maximal cardinality \(v^2\).
\end{lemma}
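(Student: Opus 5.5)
To prove that \(L\) has maximal cardinality \(v^2\), we show that any two entries in distinct cells are distinct as quantum states. That is, they are not equal up to a global phase \(e^{\mathrm{i}\theta}\).

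Suppose \(|L_{(i_1,i_2),(j_1,j_2)}\rangle=e^{\mathrm{i}\theta}|L_{(i'_1,i'_2),(j'_1,j'_2)}\rangle\) for two cells. Every vector in Construction~\ref{con:Z2Zm} has all coefficients of modulus \(1/\sqrt{2m}\) in the computational basis \(\{|(t_1,t_2)\rangle\}\). We may therefore compare coefficients coordinatewise. For every \((t_1,t_2)\in G\) this gives
\[
\chi_{i_1,i_2}\bigl(\mu(t_1,t_2)\bigr)\chi_{j_1,j_2}(t_1,t_2)
=e^{\mathrm{i}\theta}\,\chi_{i'_1,i'_2}\bigl(\mu(t_1,t_2)\bigr)\chi_{j'_1,j'_2}(t_1,t_2).
\]
Multiply both sides by \(\overline{\chi_{i_1,i_2}(\mu(t_1,t_2))\chi_{j_1,j_2}(t_1,t_2)}\), which has modulus \(1\). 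Using the identity \(\overline{\chi_{a,b}}\chi_{a',b'}=\chi_{a'-a,\,b'-b}\) established before Construction~\ref{con:Z2Zm}, applied once at the point \(\mu(t_1,t_2)\) and once at \((t_1,t_2)\), we obtain
\[
\chi_{i'_1-i_1,\;i'_2-i_2}\bigl(\mu(t_1,t_2)\bigr)\,\chi_{j'_1-j_1,\;j'_2-j_2}(t_1,t_2)=e^{-\mathrm{i}\theta}
\qquad\text{for all }(t_1,t_2)\in G.
\]
So the function \(\chi_{a_1,a_2}(\mu(\cdot))\chi_{b_1,b_2}(\cdot)\) is constant on \(G\), where \((a_1,a_2)=(i'_1-i_1,i'_2-i_2)\) and \((b_1,b_2)=(j'_1-j_1,j'_2-j_2)\).

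By the hypothesis on \(\mu\), this forces \((a_1,a_2)=(b_1,b_2)=(0,0)\). Hence the two cells coincide. All \(v^2=(2m)^2\) entries are therefore pairwise distinct, and since the cardinality is at most \(v^2\), \(L\) has maximal cardinality.

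The only step needing care is passing from equality up to phase to the constancy condition. This relies on the character identity and on the unimodularity of all coefficients, so that the division is legitimate and no coefficient vanishes. Both facts are immediate from the definitions, so I expect no real obstacle.
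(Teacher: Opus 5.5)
Your proof is correct and follows essentially the same route as the paper: compare coefficients of $|(t_1,t_2)\rangle$ under the assumed equality up to phase, use the character identity to see that $\chi_{a_1,a_2}(\mu(\cdot))\chi_{b_1,b_2}(\cdot)$ is constant, and conclude from the hypothesis on $\mu$ that the two cells coincide. The only difference is that you take the differences as $(i'-i,\,j'-j)$ where the paper takes $(i-i',\,j-j')$, which changes nothing.
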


\begin{proof}
Suppose that \(|L_{(i_1,i_2),(j_1,j_2)}\rangle=e^{\mathrm{i}\theta}|L_{(i'_1,i'_2),(j'_1,j'_2)}\rangle\) for some \(\theta\in\mathbb R\). Let
\((a_1,a_2)=(i_1-i'_1,i_2-i'_2)\) and   \((b_1,b_2)=(j_1-j'_1,j_2-j'_2)\). By the definition of \(L\), comparing the coefficients of \(|(t_1,t_2)\rangle\) gives
$
\chi_{a_1,a_2}\bigl(\mu(t_1,t_2)\bigr)
\chi_{b_1,b_2}(t_1,t_2)
=
e^{\mathrm{i}\theta}
$
for all \((t_1,t_2)\in G\). Hence \(\chi_{a_1,a_2}\bigl(\mu(t_1,t_2)\bigr)\chi_{b_1,b_2}(t_1,t_2)\) is constant on \(G\). By the assumption on \(\mu\), we have $(a_1,a_2)=(b_1,b_2)=(0,0)$.
Therefore $(i_1,i_2)=(i'_1,i'_2)$  and $(j_1,j_2)=(j'_1,j'_2)$. Thus identical entries of \(L\) must occur in the same cell. Hence all \(v^2\) entries of \(L\) are distinct, and \(L\) has maximal cardinality \(v^2\).
\end{proof}

\begin{lemma}\label{lem:complete-mapping-Z2Zm}
 Let \(m=2k\geq6\) and \(G=\mathbb Z_2\times\mathbb Z_m\). Define \(\mu : G\rightarrow G\) by
\[
\mu(0,x)=
\begin{cases}
(1,0), & x=0,\\
(0,x), & 1\leq x\leq k-1,\\
(1,x+1), & k\leq x\leq 2k-2,\\
(0,0), & x=2k-1,
\end{cases}
\]
and
\[
\mu(1,x)=
\begin{cases}
(1,x+1), & 0\leq x\leq k-2,\\
(0,k), & x=k-1,\\
(1,k), & x=k,\\
(0,x), & k+1\leq x\leq 2k-1.
\end{cases}
\]
Then \(\mu\) is a complete mapping of \(G\) and satisfies the condition in Lemma~\ref{lem:max-card-Z2Zm}.
\end{lemma}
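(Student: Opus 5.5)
The plan is to verify the three required properties of \(\mu\) directly from the piecewise definition: that \(\mu\) is a permutation of \(G\), that \(\operatorname{id}+\mu\) is a permutation of \(G\), and that \(\mu\) satisfies the character condition of Lemma~\ref{lem:max-card-Z2Zm}.

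\emph{\(\mu\) is a permutation.} I will list the images by first coordinate. The images with first coordinate \(0\) are \((0,x)\) for \(1\le x\le k-1\), together with \((0,0)\), \((0,k)\) and \((0,x)\) for \(k+1\le x\le 2k-1\). These are all of \(\{0\}\times\mathbb Z_m\). The images with first coordinate \(1\) are \((1,0)\), \((1,x+1)\) for \(k\le x\le 2k-2\) (that is, second coordinates \(k+1,\dots,2k-1\)), \((1,x+1)\) for \(0\le x\le k-2\) (second coordinates \(1,\dots,k-1\)), and \((1,k)\). Again every second coordinate occurs exactly once. Since \(G\) has \(4k\) elements, \(\mu\) is a bijection.

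\emph{\(\operatorname{id}+\mu\) is a permutation.} I will compute \(t+\mu(t)\) on each piece, reducing the second coordinate mod \(2k\).
\begin{itemize}
\item[(i)] For \(t=(0,x)\): \(x=0\) gives \((1,0)\); \(1\le x\le k-1\) gives \((0,2x)\), the even values \(2,\dots,2k-2\); \(k\le x\le 2k-2\) gives \((1,2x+1)\), the odd values \(1,3,\dots,2k-3\); and \(x=2k-1\) gives \((0,2k-1)\).
\item[(ii)] For \(t=(1,x)\): \(0\le x\le k-2\) gives \((0,2x+1)\), the odd values \(1,\dots,2k-3\); \(x=k-1\) gives \((1,2k-1)\); \(x=k\) gives \((0,0)\); and \(k+1\le x\le 2k-1\) gives \((1,2x)\), the even values \(2,\dots,2k-2\).
\end{itemize}
Collecting by first coordinate, each of \(\{0\}\times\mathbb Z_m\) and \(\{1\}\times\mathbb Z_m\) is covered exactly once. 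Hence \(\mu\) is a complete mapping.

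\emph{Character condition.} Suppose \(f(t)=\chi_{a_1,a_2}(\mu(t))\chi_{b_1,b_2}(t)\) is constant, equal to \(c\), on \(G\). I will evaluate \(f\) at a few points.
\begin{itemize}
\item[(a)] At \(t=(0,0)\), \(\mu(t)=(1,0)\), so \(c=(-1)^{a_1}\).
\item[(b)] On \(t=(0,x)\) with \(1\le x\le k-1\), we have \(f(t)=\omega^{(a_2+b_2)x}\). Because \(m\ge 6\) gives \(k\ge3\), both \(x=1\) and \(x=2\) lie in this range. Comparing them gives \(\omega^{a_2+b_2}=1\), so \(b_2=-a_2\). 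Then \(f\equiv1\) on this range, so \(c=1\) and \(a_1=0\).
\item[(c)] At \(t=(0,k)\), \(\mu(t)=(1,k+1)\), so \(f(t)=(-1)^{a_1}\omega^{a_2(k+1)-a_2k}=\omega^{a_2}\). Setting this equal to \(1\) forces \(a_2=0\), hence \(b_2=0\).
\item[(d)] At \(t=(1,k+1)\), \(\mu(t)=(0,k+1)\), so \(f(t)=(-1)^{b_1}\). Setting this equal to \(1\) forces \(b_1=0\).
\end{itemize}
Therefore \((a_1,a_2)=(b_1,b_2)=(0,0)\), which is exactly the condition required in Lemma~\ref{lem:max-card-Z2Zm}.

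The only delicate point is the bookkeeping of residues mod \(2k\) in the \(\operatorname{id}+\mu\) step. The place where the hypothesis \(m\ge6\) is used is step (b), which needs two consecutive points in the range \(1\le x\le k-1\).
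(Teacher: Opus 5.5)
Your proof is correct and follows essentially the same route as the paper. Both arguments check directly, piece by piece, that \(\mu\) and \(\operatorname{id}+\mu\) are bijections; you group the images by first coordinate, while the paper groups them by the domain halves \(\{0\}\times\mathbb Z_m\) and \(\{1\}\times\mathbb Z_m\). Both then evaluate the character product at the same points \((0,0)\), \((0,1)\), \((0,2)\), \((0,k)\), \((1,k+1)\), and you merely treat \((0,k)\) before \((1,k+1)\), which changes nothing.
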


\begin{proof}
We first show that \(\mu\) is a complete mapping of \(G\). By the definition of \(\mu\),
\[
\mu(\{0\}\times\mathbb Z_m)= \{(0,x):0\leq x\leq k-1\}  \cup\{(1,0)\}\cup\{(1,x):k+1\leq x\leq 2k-1\},
\]
and
\[
\mu(\{1\}\times\mathbb Z_m)
= \{(0,x):k\leq x\leq 2k-1\} \cup\{(1,x):1\leq x\leq k\}.
\]
These two sets are disjoint and their union is \(G\). Hence \(\mu\) is a permutation of \(G\).

Moreover,
\[
(\operatorname{id}+\mu)(0,x)=
\begin{cases}
(1,0), & x=0,\\
(0,2x), & 1\leq x\leq k-1,\\
(1,2x+1), & k\leq x\leq 2k-2,\\
(0,2k-1), & x=2k-1,
\end{cases}
\]
and
\[
(\operatorname{id}+\mu)(1,x)=
\begin{cases}
(0,2x+1), & 0\leq x\leq k-2,\\
(1,2k-1), & x=k-1,\\
(0,0), & x=k,\\
(1,2x), & k+1\leq x\leq 2k-1,
\end{cases}
\]
where the second components are taken modulo \(m=2k\). It follows that
\[
\begin{aligned}
(\operatorname{id}+\mu)(\{0\}\times\mathbb Z_m)
={}&
\{(0,2),(0,4),\ldots,(0,2k-2),(0,2k-1)\}\\
&\cup
\{(1,0),(1,1),(1,3),\ldots,(1,2k-3)\},
\end{aligned}
\]
while
\[
\begin{aligned}
(\operatorname{id}+\mu)(\{1\}\times\mathbb Z_m)
={}&
\{(0,0),(0,1),(0,3),\ldots,(0,2k-3)\}\\
&\cup
\{(1,2),(1,4),\ldots,(1,2k-2),(1,2k-1)\}.
\end{aligned}
\]
These two sets are disjoint and their union is \(G\). Thus \(\operatorname{id}+\mu\) is a permutation of \(G\), and hence \(\mu\) is a complete mapping of \(G\).

It remains to verify the condition in Lemma~\ref{lem:max-card-Z2Zm}. Let \((a_1,a_2),(b_1,b_2)\in G\), and define
\[
F(t_1,t_2)
=
\chi_{a_1,a_2}\bigl(\mu(t_1,t_2)\bigr)
\chi_{b_1,b_2}(t_1,t_2).
\]
Suppose that \(F\) is constant on \(G\). Since \(k\geq3\), we have \(\mu(0,1)=(0,1)\) and \(\mu(0,2)=(0,2)\). Thus \(F(0,1)=\omega^{a_2+b_2}\) and \(F(0,2)=\omega^{2(a_2+b_2)}\). Since \(F(0,1)=F(0,2)\), it follows that \(\omega^{a_2+b_2}=1\),   and hence \(a_2+b_2\equiv0\pmod m\). In particular, \(F(0,1)=1\), so the constant value of \(F\) is \(1\).

Since \(\mu(0,0)=(1,0)\), we have \(F(0,0)=(-1)^{a_1}\), and hence \(a_1=0\). Moreover, \(\mu(1,k+1)=(0,k+1)\), so \(F(1,k+1)=(-1)^{b_1}\omega^{(a_2+b_2)(k+1)}=(-1)^{b_1}\), which yields \(b_1=0\).

Finally, since \(\mu(0,k)=(1,k+1)\), \(F(0,k)=(-1)^{a_1}\omega^{a_2(k+1)+b_2k}=\omega^{a_2}\), where \(a_1=0\) and \(a_2+b_2=0\) have been used. Since the constant value of \(F\) is \(1\), we obtain \(a_2=0\), and consequently \(b_2=0\). Therefore
$
(a_1,a_2)=(b_1,b_2)=(0,0).
$
Thus \(\mu\) satisfies the condition in Lemma~\ref{lem:max-card-Z2Zm}.
\end{proof}

\begin{lemma}\label{th:max-RQLS-4k}
For every integer \(v\geq8\) with \(v\equiv0\pmod4\), there exists a maximal-cardinality \(1\text{-}\text{RQLS}(v)\) whose main diagonal and anti-diagonal are both transversals.
\end{lemma}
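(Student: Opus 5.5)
The plan is to treat \(v=8\) separately and, for \(v\geq12\), to reuse Construction~\ref{con:Z2Zm} with the complete mapping of Lemma~\ref{lem:complete-mapping-Z2Zm}, changing only the way \([v]\) is identified with \(G\). No new complete mapping is needed.

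\emph{Case \(v=8\).} Take the square of Example~\ref{ex:3.3}. It already has maximal cardinality \(64\). Its transversal \(T_0\) is the main diagonal. Its transversal \(T_7=\{(i,7-i):i\in[8]\}\) is the anti-diagonal. So the statement holds for \(v=8\).

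\emph{Case \(v=2m\geq12\), \(m=2k\geq6\).} Let \(\mu\) be the complete mapping of Lemma~\ref{lem:complete-mapping-Z2Zm}, and let \(L\) be the array of Construction~\ref{con:Z2Zm}, indexed by \(G\). By Construction~\ref{con:Z2Zm}, \(L\) is a \(1\text{-}\text{RQLS}(v)\) with transversals \(T_c=\{(g,g+c):g\in G\}\) for \(c\in G\). By Lemmas~\ref{lem:max-card-Z2Zm} and \ref{lem:complete-mapping-Z2Zm}, \(L\) has cardinality \(v^2\).

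The key point is that the map \([v]\to G\) used to index rows, columns and the computational basis can be any bijection \(\varphi\). Relabelling rows and columns by the same \(\varphi\), and the basis vectors by \(\varphi\) as well (a permutation unitary), preserves orthonormality of rows, columns and transversals, and preserves the number of distinct vectors. Under any such \(\varphi\), the main diagonal \(\{(t,t)\}\) corresponds to \(T_{(0,0)}\), so it is always a transversal.

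For the anti-diagonal \(\{(t,v-1-t)\}\), we choose \(\varphi\) so that the involution \(t\mapsto v-1-t\) becomes translation by the element \(c=(1,0)\) of order \(2\). Concretely, set
\[
\varphi(t)=(0,t),\qquad \varphi(v-1-t)=(1,t),\qquad 0\leq t\leq m-1.
\]
This is a bijection because \(t\mapsto v-1-t\) maps \(\{0,\dots,m-1\}\) onto \(\{m,\dots,v-1\}\). It satisfies \(\varphi(v-1-t)=\varphi(t)+(1,0)\) for every \(t\in[v]\), since \(c\) has order \(2\) and the relation therefore holds in both directions. Hence the anti-diagonal cells are exactly \(\{(g,g+(1,0)):g\in G\}=T_{(1,0)}\), which is a transversal by Construction~\ref{con:Z2Zm}.

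The relabelled square is therefore a maximal-cardinality \(1\text{-}\text{RQLS}(v)\) whose main diagonal and anti-diagonal are both transversals. The one thing to check carefully is that the simultaneous relabelling of rows, columns and basis really preserves all the required properties; this is routine since it is conjugation by a permutation.

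Before finding this relabelling, I first tried the obvious identification \(t=mt_1+t_2\). Under it the anti-diagonal becomes \(\{(g,(1,-1)-g)\}\), which would require \(\mu-\operatorname{id}\) to be a permutation. That fails for the given \(\mu\): \(\mu(0,x)-(0,x)=(0,0)\) for \(1\leq x\leq k-1\). Choosing \(\varphi\) as above avoids this issue entirely.
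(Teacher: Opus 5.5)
Your proposal is correct and is essentially the paper's proof. The case \(v=8\) uses Example~\ref{ex:3.3}, and for \(v=4k\geq12\) the paper's row/column ordering \((0,0),\ldots,(0,2k-1),(1,2k-1),\ldots,(1,0)\) is exactly your \(\varphi\); under it \(T_{(0,0)}\) and \(T_{(1,0)}\) become the main diagonal and the anti-diagonal. Your extra relabelling of the basis vectors is a harmless global permutation unitary, but it is not needed, since permuting rows and columns alone already suffices.
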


\begin{proof}
For \(v=8\), the result follows from Example~\ref{ex:3.3}, since \(T_0\) and \(T_7\) are the main diagonal and anti-diagonal, respectively.

Let \(v=4k\geq12\). By Construction~\ref{con:Z2Zm}, Lemma~\ref{lem:max-card-Z2Zm}, and Lemma~\ref{lem:complete-mapping-Z2Zm}, with \(m=2k\), there exists a
maximal-cardinality \(1\text{-}\text{RQLS}(v)\). By the proof of Construction~\ref{con:Z2Zm}, both
\[
T_{(0,0)}
=
\left\{
\bigl((i_1,i_2),(i_1,i_2)\bigr):(i_1,i_2)\in G
\right\}
\]
and
\[
T_{(1,0)}
=
\left\{
\bigl((i_1,i_2),(i_1+1,i_2)\bigr):(i_1,i_2)\in G
\right\}
\]
are transversals. Relabel the rows and columns by the elements of
\([v]=\{0,1,\ldots,4k-1\}\) according to the following ordering of \(G\):
\[
(0,0),(0,1),\ldots,(0,2k-1),
(1,2k-1),(1,2k-2),\ldots,(1,0).
\]

For \(T_{(0,0)}\), the cell \(\bigl((0,i_2),(0,i_2)\bigr)\) is relabeled as the cell in row \(i_2\) and column \(i_2\), while
\(\bigl((1,i_2),(1,i_2)\bigr)\) is relabeled as the cell in row \(4k-1-i_2\) and column \(4k-1-i_2\), for \(0\leq i_2\leq2k-1\). Hence the \(4k\) cells of \(T_{(0,0)}\) are precisely the cells on the main diagonal.

For \(T_{(1,0)}\), the cell \(\bigl((0,i_2),(1,i_2)\bigr)\) is relabeled as the cell in row \(i_2\) and column \(4k-1-i_2\), while \(\bigl((1,i_2),(0,i_2)\bigr)\) is relabeled as the cell in row \(4k-1-i_2\) and column \(i_2\), for \(0\leq i_2\leq2k-1\). Hence the \(4k\) cells of \(T_{(1,0)}\) are precisely the cells on the anti-diagonal.

Since row and column permutations preserve orthonormality, resolvability, and cardinality, the resulting square remains a maximal-cardinality \(1\text{-}\text{RQLS}(v)\) whose main diagonal and anti-diagonal are both transversals.
\end{proof}

\subsection{Constructions of \(2\text{-}\text{RQLS}(v)\text{s}\)}

We next apply the complete mapping construction to obtain maximal-cardinality \(2\text{-}\text{RQLS}(v)\text{s}\), thereby proving
Theorem~\ref{th:DRQLS}.

\textbf{Proof of Theorem~\ref{th:DRQLS}.}  Let \(G=\mathbb Z_n\times\mathbb Z_n\) and \(\zeta=e^{\frac{2\pi\mathrm{i}}{n}}\). For \((a,b)\in G\), let \(\chi_{a,b}\) be the character of \(G\) defined by
\(\chi_{a,b}(t_1,t_2)=\zeta^{at_1+bt_2}\), \((t_1,t_2)\in G\). Then
\[
\sum_{(t_1,t_2)\in G}\chi_{a,b}(t_1,t_2)
=
\begin{cases}
n^2, & \text{if } (a,b)=(0,0),\\
0, & \text{otherwise}.
\end{cases}
\]

Let \(g=(n-2\ \ n-1)\) and \(f=(2\ \ 3)(n-2\ \ n-1)\) be permutations of \(\mathbb Z_n\). Since \(n\geq6\), the transpositions \((2\ \ 3)\) and \((n-2\ \ n-1)\) are disjoint. From the definitions of \(f\) and \(g\), it follows that \(t+f(t)-g(t)=(2\ \ 3)(t)\) for every \(t\in\mathbb Z_n\). Hence \(t+f(t)-g(t)\) defines a permutation of \(\mathbb Z_n\). Define
\[
\mu(t_1,t_2)=\bigl(g(t_2),\,t_1+f(t_2)\bigr).
\]
Since \(g\) is a permutation of \(\mathbb Z_n\), the first coordinate of \(\mu(t_1,t_2)\) uniquely determines \(t_2\), and then the second coordinate
uniquely determines \(t_1\). Hence \(\mu\) is a permutation of \(G\).
Moreover,
\[
(\operatorname{id}+\mu)(t_1,t_2)=\bigl(t_1+g(t_2),\,t_1+t_2+f(t_2)\bigr),
\]
where the addition is taken in \(G=\mathbb Z_n\times\mathbb Z_n\).  Suppose that \((\operatorname{id}+\mu)(t_1,t_2)=(\operatorname{id}+\mu)(t'_1,t'_2)\).
Then \(t_1+g(t_2)=t'_1+g(t'_2)\) and \(t_1+t_2+f(t_2)=t'_1+t'_2+f(t'_2)\). Subtracting the first equality from the second gives
\[
t_2+f(t_2)-g(t_2)
=
t'_2+f(t'_2)-g(t'_2).
\]
Since \(t+f(t)-g(t)=(2\ \ 3)(t)\) defines  a permutation of \(\mathbb Z_n\), we have \(t_2=t'_2\), and hence \(t_1=t'_1\). Therefore, \(\operatorname{id}+\mu\) is injective and thus a permutation of \(G\). Hence \(\mu\) is a complete mapping.

Define an array
\(
L=\bigl(|L_{(i_1,i_2),(j_1,j_2)}\rangle\bigr)_{(i_1,i_2),(j_1,j_2)\in G}
\)
by
\[
|L_{(i_1,i_2),(j_1,j_2)}\rangle
=
\frac{1}{n}
\sum_{(t_1,t_2)\in G}
\chi_{i_1,i_2}\bigl(\mu(t_1,t_2)\bigr)
\chi_{j_1,j_2}(t_1,t_2)
|(t_1,t_2)\rangle.
\]
Since \(\mu\) is a complete mapping of \(G\), by the character-sum property above and the same argument as in the proof of Construction~\ref{con:Z2Zm}, \(L\) is a \(\text{QLS}(n^2)\),   and
\[
T_{(c_1,c_2)}
=
\left\{
\bigl((i_1,i_2),(i_1+c_1,i_2+c_2)\bigr):(i_1,i_2)\in G
\right\},
\qquad (c_1,c_2)\in G,
\]
form a set of \(n^2\) mutually disjoint transversals of \(L\).

We next verify that \(L\) has maximal cardinality. For
\((a_1,a_2),(b_1,b_2)\in G\), define
\[
F(t_1,t_2)
=
\chi_{a_1,a_2}\bigl(\mu(t_1,t_2)\bigr)
\chi_{b_1,b_2}(t_1,t_2).
\]
Suppose that \(F\) is constant on \(G\). Since
\(\mu(0,0)=(0,0)\), we have \(F(0,0)=1\), and hence
\(F(t_1,t_2)=1\) for all \((t_1,t_2)\in G\). Moreover,
\[
\mu(1,0)=(0,1),\qquad
\mu(0,1)=(1,1),\qquad
\mu(0,2)=(2,3).
\]
Evaluating \(F\) at \((1,0)\), \((0,1)\), and \((0,2)\), respectively,
yields
\[
a_2+b_1\equiv0,\qquad
a_1+a_2+b_2\equiv0,\qquad
2a_1+3a_2+2b_2\equiv0
\pmod n.
\]
Subtracting twice the second congruence from the third gives \(a_2=0\). It follows from the first two congruences that \(b_1=0\) and \(b_2=-a_1\). Finally, since \(\mu(0,n-2)=(n-1,n-1)\), the equality \(F(0,n-2)=1\) gives \(a_1=0\), and hence \(b_2=0\). Therefore, \((a_1,a_2)=(b_1,b_2)=(0,0)\). By the same argument as in Lemma~\ref{lem:max-card-Z2Zm}, \(L\) has maximal cardinality \(n^4\).

It remains to construct a second set of \(n^2\) mutually disjoint transversals. Define \(\sigma(t_1,t_2)=(t_1-t_2,-t_1)\). Clearly, \(\sigma\) is a permutation of \(G\). Moreover,
$
(\sigma-\operatorname{id})(t_1,t_2)=(-t_2,-t_1-t_2).
$
If \((\sigma-\operatorname{id})(t_1,t_2) =(\sigma-\operatorname{id})(t'_1,t'_2)\), then the first coordinates give \(t_2=t'_2\), and the second coordinates give \(t_1=t'_1\). Hence \(\sigma-\operatorname{id}\) is also a permutation of \(G\). Moreover,
$
(\mu+\sigma)(t_1,t_2)
=
\bigl(t_1+g(t_2)-t_2,\,f(t_2)\bigr),
$
which is a permutation of \(G\), since its second component uniquely determines \(t_2\), and then its first component uniquely determines \(t_1\).

For each \((d_1,d_2)\in G\), define
\[
S_{(d_1,d_2)}
=
\left\{
\bigl((i_1,i_2),\sigma(i_1,i_2)+(d_1,d_2)\bigr):
(i_1,i_2)\in G
\right\}.
\]
Since \(\sigma\) is a permutation, the sets \(S_{(d_1,d_2)}\), \((d_1,d_2)\in G\), are mutually disjoint and partition all cells of \(L\), and each contains exactly one cell in every row and every column. For any two cells \(\bigl((i_1,i_2),\sigma(i_1,i_2)+(d_1,d_2)\bigr)\) and
\(\bigl((i'_1,i'_2),\sigma(i'_1,i'_2)+(d_1,d_2)\bigr)\) in \(S_{(d_1,d_2)}\), the inner product of the corresponding vectors is
\[
\frac{1}{n^2}
\sum_{(t_1,t_2)\in G}
\chi_{i'_1-i_1,\;i'_2-i_2}\bigl(\mu(t_1,t_2)\bigr)
\chi_{\sigma(i'_1,i'_2)-\sigma(i_1,i_2)}(t_1,t_2).
\]
By the definition of \(\sigma\),
\(
\sigma(i'_1,i'_2)-\sigma(i_1,i_2)
=
\sigma(i'_1-i_1,i'_2-i_2).
\)
Hence, by the definition of the characters,
\[
\chi_{\sigma(i'_1,i'_2)-\sigma(i_1,i_2)}(t_1,t_2)
=
\chi_{i'_1-i_1,\;i'_2-i_2}\bigl(\sigma(t_1,t_2)\bigr).
\]
Therefore, the inner product of the two corresponding vectors can be rewritten as
\[
\frac{1}{n^2}
\sum_{(t_1,t_2)\in G}
\chi_{i'_1-i_1,\;i'_2-i_2}
\bigl((\mu+\sigma)(t_1,t_2)\bigr).
\]
Since \(\mu+\sigma\) is a permutation of \(G\), the character-sum property implies that this inner product is \(1\) when
\((i_1,i_2)=(i'_1,i'_2)\), and \(0\) otherwise. Hence the vectors corresponding to the cells of \(S_{(d_1,d_2)}\) form an orthonormal basis of \(\mathcal H_{n^2}\). Therefore, each \(S_{(d_1,d_2)}\) is a transversal, and
\(\{S_{(d_1,d_2)}:(d_1,d_2)\in G\}\) is a set of \(n^2\) mutually disjoint transversals of \(L\).

Finally, a cell belongs to \(T_{(c_1,c_2)}\cap S_{(d_1,d_2)}\) if and only if
\(
(\sigma-\operatorname{id})(i_1,i_2)
=
(c_1-d_1,c_2-d_2).
\)
Since \(\sigma-\operatorname{id}\) is a permutation of \(G\), this equation has a unique solution \((i_1,i_2)\in G\). Hence
$
|T_{(c_1,c_2)}\cap S_{(d_1,d_2)}|=1
$
for all \((c_1,c_2),(d_1,d_2)\in G\). Therefore, \(L\) is a maximal-cardinality \(2\text{-}\text{RQLS}(n^2)\).


\section{Main result}
In this section, we introduce a singular direct product construction for generating the \(1\text{-}\text{RQLS}(mn+h)\) with maximal cardinality. This approach extends the classical singular direct product method, which has proven  effective for classical Latin squares~\cite{Heinrich,Stinson}. The following definitions and notational conventions will be used throughout this construction.

\begin{definition}
An incomplete quantum Latin square, denoted by \(\text{IQLS}(m+h,h)\), is an \((m+h)\times(m+h)\) square whose \(h\times h\) subblock in the lower right
corner is empty, satisfying the following conditions:
\begin{itemize}
\item[(i)] Each nonempty entry is a unit column vector in the \((m+h)\)-dimensional Hilbert space \(\mathcal{H}_{m+h}\).

 \item[(ii)] For each \(i\in\{0,1,\ldots,m-1\}\), the entries in the \(i\)-th row (column) form an orthonormal basis of \(\mathcal{H}_{m+h}\).

\item[(iii)] For each \(j\in\{m,m+1,\ldots,m+h-1\}\), the nonempty entries in the \(j\)-th row (column) form an orthonormal basis of the subspace spanned by
the computational basis vectors \(|0\rangle,|1\rangle,\ldots,|m-1\rangle\in\mathcal{H}_{m+h}\).
\end{itemize}
\end{definition}

The cardinality \(c\) of an \(\text{IQLS}(m+h,h)\) is defined as the number of distinct vectors in the array. Clearly, $m+h\leq c\leq (m+h)^2-h^2$. An \(\text{IQLS}(m+h,h)\) has maximal cardinality if  $c=(m+h)^2-h^2$.

\begin{definition}
An incomplete transversal of an \(\text{IQLS}(m+h,h)\) is a set of \(m\) nonempty entries, one from each of the first \(m\) rows and columns, forming an orthonormal basis of the subspace spanned by the computational basis vectors \(|0\rangle,|1\rangle,\ldots,|m-1\rangle\in\mathcal{H}_{m+h}\).
\end{definition}

\begin{definition}
An \(\text{IQLS}(m+h,h)\) is called an incomplete resolvable quantum Latin square, denoted by \(\text{IRQLS}(m+h,h)\), if its nonempty entries can be
partitioned into \(m\) transversals and \(h\) incomplete transversals.
\end{definition}

Consider the following quantum state vectors:
$$
|a\rangle = \begin{pmatrix} a_0 \\ a_1 \\ \vdots \\ a_{n-1} \end{pmatrix} \in \mathcal{H}_n, \quad
|b\rangle = \begin{pmatrix} b_0 \\ b_1 \\ \vdots \\ b_{m-1} \end{pmatrix} \in \mathcal{H}_m, \quad
|c\rangle = \begin{pmatrix} c_0 \\ c_1 \\ \vdots \\ c_{m} \end{pmatrix} \in \mathcal{H}_{m+1}.
$$

\begin{definition}[Extended Tensor Product $\otimes_+$]
Let $h$ be a positive integer , we define the extended tensor product operation $\otimes_+$ as:
$$
|a\rangle \otimes_+ |b\rangle= \begin{pmatrix} |a\rangle \otimes |b\rangle \\ \mathbf{0}_h  \end{pmatrix} \in \mathcal{H}_{mn+h}.
$$
where $\mathbf{0}_h \in \mathbb{C}^h$ is the zero vector.
\end{definition}

\begin{definition}[Parameterized Tensor Product $\otimes_r$]
For each index $r \in \{0,1,\ldots,h-1\}$, we define the operation $\otimes_r$ as:
$$
|a\rangle \otimes_r |c\rangle= \begin{pmatrix} |a\rangle \otimes |c_{[m-1]}\rangle \\ c_m|r\rangle \end{pmatrix} \in \mathcal{H}_{mn+h},
$$
where $|c_{[m-1]}\rangle = (c_0, c_1, \ldots, c_{m-1})^\top$ is the projection onto the first $m$ components of $|c\rangle$, $|r\rangle \in \mathbb{C}^h$ is the $r$-th standard basis vector ($|r\rangle = (0,\ldots,0,1,0,\ldots,0)^\top$  with  $r$-th component equals to  $1$).
\end{definition}

\begin{construction}[Singular direct product construction]\label{con:Singular}  Let \( n, m, h \) be positive integers with \( n \geq h \). Suppose the following conditions are satisfied:
\begin{itemize}
\item[(i)] There exists a classical \(2\text{-}\text{RQLS}(n)\);
  \item[(ii)] There exist \( n - h \)   \(1\text{-}\text{RQLS}(m)\text{s}\) with maximal cardinality, such that all elements among them are mutually distinct;
  \item[(iii)] There exist \( h \) \(\text{IRQLS}(m+1,1)\text{s}\)  with maximal cardinality, such that all elements among them are mutually distinct;
  \item[(iv)] The vectors in these \( n - h \) \(1\text{-}\text{RQLS}(m)\text{s}\) are distinct from the \( m \)-dimensional projections of those  elements in the \(\text{IRQLS}(m+1, 1)\text{s}\) whose last  component is zero;
  \item[(v)] There exists a  \(1\text{-}\text{RQLS}(h)\) with maximal cardinality.
\end{itemize}
Then there exists a \(1\text{-}\text{RQLS}(mn+h)\) with maximal cardinality.
\end{construction}

\begin{proof}
Let \(A=(|a_{i,j}\rangle)\) be a classical \(2\text{-}\text{RQLS}(n)\) with two sets of mutually disjoint transversals \(\{T_p:p\in[n]\}\) and \(\{T'_q:q\in[n]\}\), satisfying \(|T_p\cap T'_q|=1\) for all \(p,q\in[n]\). For \((i,j)\in T_p\cap T'_q\), denote the corresponding entry \(|a_{i,j}\rangle\) by \(|a_{i,j,p,q}\rangle\).  Note that  \( |a_{i,j,p,q} \rangle \in \left\{|0 \rangle ,|1 \rangle ,\dots,|n-1 \rangle \right\} \).

Let $B^{(g)}=\bigl(|b^{(g)}_{k,l}\rangle\bigr)$, $g\in[n-h]$, be \(n-h\) maximal-cardinality \(1\text{-}\text{RQLS}(m)\text{s}\) such that all
vectors among them are mutually distinct. If \(|b^{(g)}_{k,l}\rangle\) is in the \(e\)-th transversal of \(B^{(g)}\), where \(e\in[m]\), denote it by \(|b^{(g)}_{k,l,e}\rangle\).

Let \(C^{(r)}=\bigl(|c^{(r)}_{s,t}\rangle\bigr)\), \(r\in[h]\), be \(h\) maximal-cardinality \(\text{IRQLS}(m+1,1)\text{s}\) such that all vectors among them are mutually distinct. If \(|c^{(r)}_{s,t}\rangle\) is in the \(w\)-th transversal of \(C^{(r)}\), where \(w\in[m]\), denote it by \(|c^{(r)}_{s,t,w}\rangle\). If \(|c^{(r)}_{s,t}\rangle\) belongs to the unique incomplete transversal of \(C^{(r)}\), denote it by \(|c^{(r)}_{s,t,\mathrm{I}}\rangle\).
For the purpose of our construction, we partition \( C^{(r)} \) into three distinct components: the upper-left \( m \times m\) submatrix  \( C^{(r)}_1 \), the lower-left \( 1 \times m \) submatrix   \( C^{(r)}_2 \), and the upper-right \( m \times 1 \) submatrix   \( C^{(r)}_3 \).

Let \(D=(|d_{u,v}\rangle)\) be a \(1\text{-}\text{RQLS}(h)\) with maximal cardinality. If \(|d_{u,v}\rangle\) belongs to the \(z\)-th transversal of \(D\), where \(z\in[h]\), denote it by \(|d_{u,v,z}\rangle\).

We construct a   matrix of order $mn+h$, denoted as $M$.  First, partition the subarray of order \(mn\) in the upper-left corner of \(M\) into \(n^2\) blocks of size \(m\times m\). For each \((i,j)\in[n]\times[n]\), define the \((i,j)\)-th block \(M_{i,j}\) according to which transversal in \(\{T_p:p\in[n]\}\) contains \((i,j)\), as follows:
\[
M_{i,j}=
\begin{cases}
|a_{i,j,g,q}\rangle\otimes_{+} B^{(g)}, & \text{if } (i,j)\in T_g,\quad g\in[n-h],\\[2mm]
|a_{i,j,n-h+r,q}\rangle\otimes_{r} C^{(r)}_1, & \text{if } (i,j)\in T_{n-h+r},\quad r\in[h],
\end{cases}
\]
where \(q\) is determined by \((i,j)\in T'_q\).
Next, consider the $h \times mn$ submatrix in the lower-left corner of $M$, which is partitioned into $hn$ blocks, each of size $1 \times m$. The $j$-th block in the \( (mn+r) \)-th row  is defined as
 $$M_{mn+r,j} = |a_{i,j,n-h+r,q}\rangle \otimes_{r} C^{(r)}_2.
$$
Similarly, the $mn \times h$ submatrix in the upper-right corner of $M$ is partitioned into $hn$ blocks, each of size $m \times 1$. The $i$-th block in the \( (mn+r) \)-th column is defined as
$$
M_{i,mn+r} = |a_{i,j,n-h+r,q}\rangle \otimes_{r} C^{(r)}_3.
$$
Finally, the  submatrix  of order $h$ in the lower-right corner of $M$ is denoted as $M_h$. For \(u,v\in[h]\), the entry in the \(u\)-th row and
\(v\)-th column of \(M_h\) is defined by $\begin{pmatrix} \mathbf{0}_{mn} \\ |d_{u,v}\rangle \end{pmatrix}$, which corresponds to the $(mn + u)$-th row and $(mn + v)$-th column of $M$. Note that the matrix \( M \) consists of elements from  \(|a_{i,j,g,q}\rangle\otimes_{+} B^{(g)}\), \(|a_{i,j,n-h+r,q}\rangle\otimes_{r} C^{(r)}\), and \(M_h\).

By the singular direct product construction in \cite{Zhang3}, the array \(M\) constructed above is a \(\text{QLS}(mn+h)\) with maximal cardinality \((mn+h)^2\). It remains to prove that \(M\) is resolvable. For this purpose, we define \(mn+h\) subsets of \(M\).

For each \(q\in[n]\) and \(e\in[m]\), define
\[
\begin{aligned}
\mathcal R_{q,e}
={}&
\bigcup_{g\in[n-h]}
\Bigl\{
|a_{i,j,g,q}\rangle\otimes_{+}|b^{(g)}_{k,l,e}\rangle :
(i,j)\in T_g\cap T'_q
\Bigr\}\\
&\cup
\bigcup_{r\in[h]}
\Bigl\{
|a_{i,j,n-h+r,q}\rangle\otimes_r|c^{(r)}_{s,t,e}\rangle :
(i,j)\in T_{n-h+r}\cap T'_q
\Bigr\}.
\end{aligned}
\]
where \(|b^{(g)}_{k,l,e}\rangle\) and \(|c^{(r)}_{s,t,e}\rangle\) range over all entries in the \(e\)-th transversals of \(B^{(g)}\) and \(C^{(r)}\), respectively. Since \(|T_p\cap T'_q|=1\) for all \(p,q\in[n]\), the transversal \(T'_q\) contains exactly one position from each \(T_p\). Hence, among
the \(n\) positions of \(T'_q\), \(n-h\) correspond to the blocks \(|a_{i,j,g,q}\rangle\otimes_{+}B^{(g)}\), each contributing \(m\) entries to \(\mathcal R_{q,e}\), while the remaining \(h\) positions correspond to the blocks \(|a_{i,j,n-h+r,q}\rangle\otimes_r C^{(r)}\), each contributing \(m+1\) entries. Since these entries lie in distinct blocks of \(M\), they are mutually disjoint. Therefore,
\[
|\mathcal R_{q,e}|=(n-h)m+h(m+1)=mn+h.
\]

Moreover, since \(T'_q\) is a transversal of \(A\), its \(n\) positions lie in distinct rows and columns. If \((i,j)\in T_g\), \(g\in[n-h]\), the \(e\)-th transversal of \(B^{(g)}\) yields \(m\) entries of \(M_{i,j}\) lying in distinct rows and columns. If \((i,j)\in T_{n-h+r}\), \(r\in[h]\), the \(e\)-th transversal of \(C^{(r)}\) yields \(m+1\) entries of \(M\), of which \(m\) lie in the rows and columns associated with \(M_{i,j}\), while the remaining one lies in the \((mn+r)\)-th row and the \((mn+r)\)-th column. Hence \(\mathcal R_{q,e}\) contains exactly one entry in each row and each column of \(M\).

For each \(r\in[h]\), define
\[
\begin{aligned}
\mathcal S_r
={}&
\bigcup_{q\in[n]}
\Bigl\{
|a_{i,j,n-h+r,q}\rangle\otimes_r
|c^{(r)}_{s,t,\mathrm{I}}\rangle :
(i,j)\in T_{n-h+r}\cap T'_q
\Bigr\} \cup
\Biggl\{
\begin{pmatrix}
\mathbf{0}_{mn}\\
|d_{u,v,r}\rangle
\end{pmatrix}
\Biggr\},
\end{aligned}
\]
where \(|c^{(r)}_{s,t,\mathrm{I}}\rangle\) ranges over all entries in the incomplete transversal of \(C^{(r)}\), and \(|d_{u,v,r}\rangle\) ranges over all entries in the \(r\)-th transversal of \(D\). Since the transversals \(T'_q\), \(q\in[n]\), are mutually disjoint, the intersections \(T_{n-h+r}\cap T'_q\) determine \(n\) distinct blocks of \(M\). Each block contributes \(m\) entries determined by the incomplete transversal of \(C^{(r)}\), yielding a total of \(nm\) entries. These entries lie in the subarray of order \(mn\) in the upper-left corner of \(M\), whereas the \(r\)-th transversal of \(D\) contributes \(h\) entries in \(M_h\). Hence
$
|\mathcal S_r|=nm+h.
$

Since \(T_{n-h+r}\) is a transversal of \(A\), its \(n\) positions lie in distinct rows and columns. Moreover, the incomplete transversal of \(C^{(r)}\) contains exactly one entry in each of its first \(m\) rows and columns. Thus the first part of \(\mathcal S_r\) contains exactly one entry in each of the first \(mn\) rows and columns of \(M\), while the \(r\)-th transversal of \(D\) contributes exactly one entry in each of the remaining \(h\) rows and columns. Therefore, \(\mathcal S_r\) contains exactly one entry in each row and each column of \(M\).

Thus, we obtain \(nm+h\) subsets
$
\{\mathcal R_{q,e}:q\in[n],\,e\in[m]\}
\cup
\{\mathcal S_r:r\in[h]\}.
$
We next show that these \(mn+h\) subsets are mutually disjoint and partition the entries of \(M\). For a fixed \(q\), the sets \(\mathcal R_{q,e}\), \(e\in[m]\), are mutually disjoint since the transversals of each \(B^{(g)}\) and \(C^{(r)}\) are mutually disjoint. For distinct \(q,q'\in[n]\), the transversals \(T'_q\) and \(T'_{q'}\) are disjoint, and hence \(\mathcal R_{q,e}\) and \(\mathcal R_{q',e'}\) are disjoint.  Moreover, the incomplete transversal of each \(C^{(r)}\) is disjoint from its \(m\) transversals, so every \(\mathcal S_r\) is disjoint from all \(\mathcal R_{q,e}\). The sets \(\mathcal S_r\), \(r\in[h]\), are also mutually disjoint.

Furthermore, the \(m\) transversals of each \(B^{(g)}\) partition its entries, the \(m\) transversals together with the incomplete transversal of each \(C^{(r)}\) partition its nonempty entries, and the \(h\) transversals of \(D\) partition its entries. Hence
$
\{\mathcal R_{q,e}:q\in[n],\,e\in[m]\}
\cup
\{\mathcal S_r:r\in[h]\}
$
forms a partition of the entries of \(M\).

It remains to verify that each \(\mathcal R_{q,e}\), \(q\in[n]\), \(e\in[m]\), and each \(\mathcal S_r\), \(r\in[h]\), is a transversal of \(M\).   This is verified in Appendix~A. Therefore, these \(mn+h\) mutually disjoint transversals partition the entries of \(M\), and hence \(M\) is resolvable. Thus, \(M\) is a \(1\text{-}\text{RQLS}(mn+h)\) with maximal cardinality.
\end{proof}

\begin{lemma}{\rm (\cite{Zhang1})}\label{lem:abca}
Let \(|a\rangle,|b\rangle\in\mathcal{H}_{m}\) , and \(|c\rangle,|d\rangle\in\mathcal{H}_{n}\) be unit vectors. Then \(|a\rangle \otimes |c\rangle\) and \(|b\rangle \otimes |d\rangle\) are identical if and only if \(|a\rangle\) is identical to \(|b\rangle\) and \(|c\rangle \) is identical to \(|d\rangle\). \end{lemma}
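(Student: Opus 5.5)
The plan is to prove the two implications separately. The reverse direction is a one-line phase computation. For the forward direction, I will contract the tensor identity against one factor, which reduces it to a scalar multiple relation between the other factors, and then use that all the vectors are unit vectors to force the scalar to be a phase.

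\emph{The ``if'' direction.} Suppose \(|a\rangle=e^{\mathrm{i}\alpha}|b\rangle\) and \(|c\rangle=e^{\mathrm{i}\beta}|d\rangle\) for real \(\alpha,\beta\). Bilinearity of \(\otimes\) gives \(|a\rangle\otimes|c\rangle=e^{\mathrm{i}(\alpha+\beta)}|b\rangle\otimes|d\rangle\). Hence the two tensor products are identical.

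\emph{The ``only if'' direction.} Assume \(|a\rangle\otimes|c\rangle=e^{\mathrm{i}\theta}|b\rangle\otimes|d\rangle\). In coordinates this reads \(a_kc_l=e^{\mathrm{i}\theta}b_kd_l\) for all \(k\in[m]\), \(l\in[n]\).
\begin{itemize}
\item Multiply by \(\overline{d_l}\) and sum over \(l\). Using \(\sum_l|d_l|^2=1\), this gives \((|d\rangle,|c\rangle)\,|a\rangle=e^{\mathrm{i}\theta}|b\rangle\).
\item Take norms. Both \(|a\rangle\) and \(|b\rangle\) are unit vectors, so \(|(|d\rangle,|c\rangle)|=1\). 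Thus \((|d\rangle,|c\rangle)=e^{\mathrm{i}\varphi}\) for some real \(\varphi\).
\item It follows that \(|a\rangle=e^{\mathrm{i}(\theta-\varphi)}|b\rangle\), so \(|a\rangle\) is identical to \(|b\rangle\).
\item Symmetrically, multiply by \(\overline{b_k}\) and sum over \(k\). This gives \((|b\rangle,|a\rangle)\,|c\rangle=e^{\mathrm{i}\theta}|d\rangle\).
\item The same norm argument shows \((|b\rangle,|a\rangle)\) is a phase. Hence \(|c\rangle\) is identical to \(|d\rangle\).
\end{itemize}

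The only step needing care is checking that the contracted scalar is nonzero, so that one can divide by it. Taking norms of both sides handles this automatically: the right-hand side has norm \(1\), so the scalar has modulus \(1\). Nothing else in the argument is delicate. No result earlier in the paper is needed beyond the definition of identical vectors.
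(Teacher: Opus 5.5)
Your proof is correct and complete: contracting $a_kc_l=e^{\mathrm{i}\theta}b_kd_l$ against $\overline{d_l}$ (respectively $\overline{b_k}$) and comparing norms forces the contracted scalar to be a unimodular phase, which is exactly what the definition of ``identical'' requires. The paper gives no proof of this lemma of its own; it quotes the result from earlier work of Zhang et al. Your argument therefore serves as a self-contained proof, and it uses the same inner-product convention as the paper (conjugate-linear in the first argument).
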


\begin{lemma}{\rm (\cite{Zhang6})}\label{lem:disjoint-QLSs}
Suppose that \(m\geq3\). For any positive integer \(k\), the following statements hold:

{\rm (1)} If there exists a \(\text{QLS}(m)\) with cardinality \(c\), then there exist \(k\) \(\text{QLS}(m)\text{s}\), each of cardinality \(c\),
such that all elements among them are mutually distinct.

{\rm (2)} If there exists a \(\text{QLS}(m+1)\) with cardinality \(c\), then there exist \(k\) \(\text{QLS}(m+1)\text{s}\), each of cardinality \(c\),
such that the intersection of any two of them is exactly \(\{|m\rangle\}\).
\end{lemma}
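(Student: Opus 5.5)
The plan is to obtain all the required squares from a single given square by applying suitably generic unitary transformations to every entry. The basic observation is this. If \(L=(|l_{i,j}\rangle)\) is a \(\text{QLS}(m)\) and \(U\) is a unitary on \(\mathcal H_m\), then \(UL=(U|l_{i,j}\rangle)\) is again a \(\text{QLS}(m)\), because \(U\) maps orthonormal bases to orthonormal bases. Moreover, \(U|x\rangle=e^{\mathrm i\theta}U|y\rangle\) holds if and only if \(|x\rangle=e^{\mathrm i\theta}|y\rangle\), so \(UL\) has the same cardinality \(c\) as \(L\). It therefore suffices to choose unitaries \(U_0,\dots,U_{k-1}\) such that the transformed vector sets meet exactly as the statement requires.

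For (1), let \(S\) be the finite set of distinct vectors of \(L\), up to phase. The squares \(U_aL\) and \(U_bL\) share a vector exactly when \(U_b^{-1}U_a|x\rangle\) is identical to \(|y\rangle\) for some \(|x\rangle,|y\rangle\in S\). For fixed unit vectors \(|x\rangle,|y\rangle\), the set
\[
\mathcal B_{x,y}=\{W\in U(m): W|x\rangle\in\mathbb C|y\rangle\}
\]
is a coset of the stabilizer of a line. It is therefore a proper closed real-algebraic subset of \(U(m)\) of positive codimension when \(m\ge2\), and hence has Haar measure zero.

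We choose the unitaries inductively. Take \(U_0=I\). Suppose \(U_0,\dots,U_{b-1}\) have been chosen. Pick \(U_b\) outside the finite union
\[
\bigcup_{a<b}\ \bigcup_{x,y\in S} U_a\,\mathcal B_{x,y}^{-1}.
\]
This union is again a finite union of measure-zero sets, so such a \(U_b\) exists. The resulting squares \(U_0L,\dots,U_{k-1}L\) have pairwise disjoint vector sets, which proves (1).

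For (2), start with a \(\text{QLS}(m+1)\) \(L\) of cardinality \(c\). First apply a unitary \(W\) sending some entry of \(L\) to \(|m\rangle\); this preserves the QLS property and the cardinality, so we may assume \(|m\rangle\) occurs in \(L\). We now restrict to unitaries of the block form \(V=U\oplus 1\) with \(U\in U(m)\). Every such \(V\) fixes \(|m\rangle\), so \(|m\rangle\) lies in every square \(V_aL\).

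Next we check that no other coincidences are forced. Take \(|x\rangle,|y\rangle\in S\) not both identical to \(|m\rangle\). If \(|x\rangle\) is identical to \(|m\rangle\), then \(V|x\rangle\) is identical to \(|m\rangle\), which differs from \(|y\rangle\); so no \(V\) causes a coincidence. Otherwise \(|x\rangle\) has a nonzero component \(|x'\rangle\) in \(\operatorname{span}\{|0\rangle,\dots,|m-1\rangle\}\). The condition \(V|x\rangle\in\mathbb C|y\rangle\) then forces \(U|x'\rangle\) into a fixed line and also fixes the last coordinate up to a common phase. Hence the set of bad \(U\) is again a proper real-algebraic subset of \(U(m)\), and so has measure zero. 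Choosing \(V_0=I,V_1,\dots,V_{k-1}\) inductively as in (1) makes every pairwise intersection exactly \(\{|m\rangle\}\).

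The only step needing care is the claim that each bad set \(\mathcal B_{x,y}\), or its block-diagonal analogue, is a proper subset of positive codimension. For (2) this requires \(m\ge2\) so that \(U(m)\) acts with positive-dimensional orbits on lines, and the hypothesis \(m\ge3\) comfortably covers it. I would verify this by exhibiting a one-parameter family of diagonal unitaries that moves \(|x'\rangle\) off any given line. I will treat that verification as the main technical point; the rest is routine bookkeeping.
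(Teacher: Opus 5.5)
Your proposal is correct and follows the route the paper indicates for this cited lemma: every square is the image of one given square under a suitably chosen unitary applied to all entries, with block unitaries $U\oplus 1$ fixing $|m\rangle$ in part (2), and your Haar-measure argument on cosets of stabilizers is a valid way to make ``suitable'' precise. One small caution: your closing plan to certify positive codimension with diagonal unitaries fails when $|x'\rangle$ is a multiple of a computational basis vector, since diagonal unitaries fix those lines. That check is unnecessary anyway, because the bad set already lies in a coset of the stabilizer of the line $\mathbb{C}|x'\rangle$, which has codimension $2m-2>0$.
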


\begin{lemma}\label{lem:conditions-234}
Suppose that \(m\geq3\), and let \(n\) and \(h\) be positive integers with \(n\geq h\). If there exist a maximal-cardinality \(1\text{-}\text{RQLS}(m)\) and a maximal-cardinality  \(1\text{-}\text{RQLS}(m+1)\), then conditions {\rm (ii)--(iv)} of Construction~\ref{con:Singular} can be satisfied.
\end{lemma}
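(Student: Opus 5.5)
The plan is to obtain every object needed in conditions (ii)--(iv) from the two given squares by global unitary transformations and one row/column relabelling. These operations preserve orthonormality of rows, columns and transversals, hence resolvability and cardinality. The only freedom left is the choice of unitaries, which I will make generic so that the required distinctness holds.

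\emph{Step 1 (unitary invariance).} Let $X$ be a $1$-RQLS$(v)$ and $U$ a unitary on $\mathcal H_v$. The square $UX=(U|x_{i,j}\rangle)$ is again a $1$-RQLS$(v)$ with the same transversals, since $U$ preserves inner products. Because $U|x\rangle=e^{\mathrm i\theta}U|y\rangle$ if and only if $|x\rangle=e^{\mathrm i\theta}|y\rangle$, $UX$ also has the same cardinality.

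\emph{Step 2 (genericity lemma).} Fix a finite set $\mathcal F$ of pairs of unit vectors $(|x\rangle,|y\rangle)$ in $\mathcal H_v$. I claim some unitary $U$ satisfies $U|x\rangle\neq e^{\mathrm i\theta}|y\rangle$ for every pair in $\mathcal F$ and every $\theta$.
\begin{itemize}
\item For one pair, the bad set $\{U: U|x\rangle\in\mathbb C|y\rangle\}$ is a finite union of cosets of the stabiliser of the line $\mathbb C|x\rangle$. Each coset is a closed subset of $U(v)$ of strictly smaller dimension when $v\ge2$.
\item A finite union of such sets is nowhere dense, so a suitable $U$ exists.
\item The same argument works inside the subgroup $\{U\oplus 1\}$, for vectors whose first $m$ coordinates are nonzero.
\end{itemize}
Making this dimension/nowhere-density argument rigorous, or replacing it by an explicit choice such as a generic diagonal phase followed by a fixed Fourier-type unitary, is where I expect the main technical care to be needed.

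\emph{Step 3 (one IRQLS$(m+1,1)$).} Let $E$ be a maximal-cardinality $1$-RQLS$(m+1)$.
\begin{itemize}
\item Permute rows and columns so that some cell, lying in a transversal $T^*$, sits at position $(m,m)$.
\item Apply a unitary $V$ sending that entry to $|m\rangle$.
\item Delete cell $(m,m)$. In row $m$ and in column $m$, the other $m$ entries are orthogonal to $|m\rangle$, so they form an orthonormal basis of $\mathrm{span}\{|0\rangle,\dots,|m-1\rangle\}$.
\item $T^*$ minus the deleted cell consists of $m$ vectors orthogonal to $|m\rangle$, one in each of the first $m$ rows and columns. So it is an incomplete transversal.
\item The remaining $m$ transversals avoid $(m,m)$ and stay complete transversals.
\end{itemize}
The result is an IRQLS$(m+1,1)$ $C$ with $(m+1)^2-1$ distinct entries, hence of maximal cardinality.

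\emph{Step 4 (conditions (iii), (ii), (iv)).}
\begin{itemize}
\item \emph{(iii).} Set $C^{(r)}=(U_r\oplus1)C$ for $r\in[h]$. Each $C^{(r)}$ is still an IRQLS$(m+1,1)$ with maximal cardinality, because $U_r\oplus1$ fixes $|m\rangle$ and preserves $\mathrm{span}\{|0\rangle,\dots,|m-1\rangle\}$. The nonempty entries of $C$ are all distinct from $|m\rangle$, so each has a nonzero projection onto the first $m$ coordinates. Choosing the $U_r$ one at a time via Step 2, against the finitely many earlier vectors, makes all entries of the $C^{(r)}$ mutually distinct.
\item \emph{(ii).} Let $B$ be the maximal-cardinality $1$-RQLS$(m)$. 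Choose $W_g\in U(m)$, $g\in[n-h]$, successively by Step 2 and set $B^{(g)}=W_gB$. The finite set of forbidden targets for $W_g$ is: all entries of the earlier $B^{(g')}$, together with the normalised $m$-dimensional projections of entries of the $C^{(r)}$ whose last component is zero. This gives condition (ii).
\item \emph{(iv).} The second family of forbidden targets in the choice of $W_g$ gives condition (iv) directly.
\end{itemize}
The hypothesis $m\geq3$ guarantees $m\ge2$ in Step 2, so the bad sets are proper and these generic choices exist.
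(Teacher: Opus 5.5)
Your proof is correct, but it reaches the conclusion by a somewhat different route than the paper.

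The paper does not argue genericity itself. It invokes Lemma~\ref{lem:disjoint-QLSs} and remarks that the squares produced in that lemma's proof are unitary images of a single square, so resolvability carries over. It then obtains (iii) from part (2) of that lemma by moving the common entry $|m\rangle$ to the lower-right corner and deleting it. Finally, it obtains (ii) and (iv) by pigeonhole: among arbitrarily many pairwise disjoint maximal-cardinality \(1\text{-}\text{RQLS}(m)\)s, each of the finitely many forbidden projections occurs in at most one, so $n-h$ squares avoiding all of them remain.

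You instead prove the needed genericity directly in $U(m)$. You build one \(\text{IRQLS}(m+1,1)\) yourself and spread it out with maps $U_r\oplus 1$. You then fold (iv) into (ii) by adding the projections to the forbidden targets when choosing each $W_g$.

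Your version is self-contained: it does not depend on the internal structure of a proof in another paper, and it only needs $m\ge 2$. The paper's version is shorter and, once the cited lemma is granted, purely combinatorial.

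The step you flag as delicate is routine. The bad set for a pair $(|x\rangle,|y\rangle)$ is a single coset of the stabiliser of the line $\mathbb C|x\rangle$. For $m\ge2$ this stabiliser is a proper closed subgroup of the connected group $U(m)$. A subgroup with nonempty interior is open, and hence equal to the whole connected group. So these cosets are closed with empty interior, and a finite union of closed nowhere dense sets has dense complement.

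Two cosmetic points. ``First $m$ coordinates nonzero'' should read ``nonzero projection onto the first $m$ coordinates''. The projections in (iv) need no normalisation, since those entries have last component zero.
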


\begin{proof}
In the proof of Lemma~\ref{lem:disjoint-QLSs}, the quantum Latin squares  are obtained from a given
quantum Latin square by applying suitable unitary transformations to all its entries.  If the given square is resolvable, then the resulting squares are also resolvable, since the vectors in each transversal are transformed by the same unitary matrix and hence still form an orthonormal basis.  It follows from Lemma~\ref{lem:disjoint-QLSs}{\rm (1)} that, if there exists a maximal-cardinality \(1\text{-}\text{RQLS}(m)\), then for any positive integer \(k\), there exist \(k\) maximal-cardinality \(1\text{-}\text{RQLS}(m)\text{s}\) such that all elements among them are mutually distinct.

By Lemma~\ref{lem:disjoint-QLSs}{\rm (2)}, there exist \(h\) maximal-cardinality \(1\text{-}\text{RQLS}(m+1)\text{s}\) such that the intersection of any two of them is exactly \(\{|m\rangle\}\). For each square, permute its rows and columns, if necessary, so that \(|m\rangle\) lies in the lower-right corner, and then delete this entry. The resulting array is an \(\text{IRQLS}(m+1,1)\), since the transversal containing \(|m\rangle\) becomes an incomplete transversal after
\(|m\rangle\) is deleted, while the other \(m\) transversals remain unchanged. Moreover, its cardinality is \((m+1)^2-1\), and hence is maximal. Since the common element \(|m\rangle\) has been deleted, none of the resulting \(\text{IRQLS}(m+1,1)\text{s}\) contains \(|m\rangle\).
Therefore, we obtain \(h\) maximal-cardinality \(\text{IRQLS}(m+1,1)\text{s}\) such that all elements among them are mutually distinct.

There are only finitely many \(m\)-dimensional projections of those elements in the \(h\) maximal-cardinality \(\text{IRQLS}(m+1,1)\text{s}\) whose last component is zero. By Lemma~\ref{lem:disjoint-QLSs}{\rm (1)}, there exist arbitrarily many maximal-cardinality \(1\text{-}\text{RQLS}(m)\text{s}\) whose elements are
mutually distinct. Since these \(1\text{-}\text{RQLS}(m)\text{s}\) are mutually disjoint, each of the above projections can occur in at most one of them. Therefore, only finitely many of these \(1\text{-}\text{RQLS}(m)\text{s}\) contain one of the above projections. Hence we can choose \(n-h\) maximal-cardinality
\(1\text{-}\text{RQLS}(m)\text{s}\) whose elements are mutually distinct and are also distinct from all these projections.

Therefore, conditions {\rm (ii)--(iv)} of Construction~\ref{con:Singular} can be satisfied simultaneously.
\end{proof}

\begin{lemma}\label{lem:RQLS-90}
There exists a \(1\text{-}\text{RQLS}(90)\) with maximal cardinality.
\end{lemma}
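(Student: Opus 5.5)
We will obtain a maximal-cardinality \(1\text{-}\text{RQLS}(90)\) from the singular direct product, Construction~\ref{con:Singular}, with \(mn+h=90\). The most natural decomposition is
\[
90 = 8\cdot 11 + 2,
\]
but \(h=2\) fails condition (v), since no \(1\text{-}\text{RQLS}(2)\) exists. We therefore look for a split \(90=mn+h\) with \(h\geq 7\), \(h\neq 10\), or with \(h\) a multiple of \(4\) at least \(8\).

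Candidates are \(90=7\cdot 11+13\), \(90=8\cdot 10+10\), \(90=9\cdot 9+9\) and \(90=11\cdot 7+13\). We choose \(m=9\), \(n=9\), \(h=9\) and check each hypothesis of Construction~\ref{con:Singular} in turn.

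\begin{itemize}
\item[(i)] A classical \(2\text{-}\text{RQLS}(9)\) exists, since three MOLS of order \(9\) exist and Lemma~\ref{lem:t-resolvable-MOQLS} applies.
\item[(ii)--(iv)] These follow from Lemma~\ref{lem:conditions-234} with \(m=9\geq3\). It requires a maximal-cardinality \(1\text{-}\text{RQLS}(9)\), which Theorem~\ref{th:1.1}(2) gives for odd \(v\geq7\). It also requires a maximal-cardinality \(1\text{-}\text{RQLS}(10)\), and here is the problem: \(10\) is an exceptional order in Theorem~\ref{th:1.2}, so this is unavailable.
\end{itemize}

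So we must choose \(m\) such that both \(m\) and \(m+1\) are admissible. Examples are \(m=7\) (with \(m+1=8\), from Example~\ref{ex:3.3}), \(m=8\) (with \(m+1=9\)), and \(m=11,12\) (with \(12=4\cdot 3\) and \(13\) odd).

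We take \(m=8\), so \(90=8n+h\) with \(n\geq h\), \(h\notin\{2,6,10\}\), \(h\geq 7\), and \(h\) odd or a multiple of \(4\) at least \(8\). The choice \(h=2\) is excluded. The choice \(n=10,h=10\) fails because neither a \(2\text{-}\text{RQLS}(10)\) nor a \(1\text{-}\text{RQLS}(10)\) is available. The choice \(n=9,h=18\) fails since \(h>n\).

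Next we try \(m=7\), \(n=11\), \(h=13\), but this fails because \(h>n\). Then \(m=11\), \(n=7\), \(h=13\) fails for the same reason. Then \(m=12\), \(n=7\), \(h=6\) fails because \(h=6\). Finally, \(m=7\), \(n=12\), \(h=6\) also fails because \(h=6\).

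Since Construction~\ref{con:Singular} with \(h=1\) needs only a \(1\text{-}\text{RQLS}(1)\), we check trivial orders. For \(h=1\) we need \(mn=89\), which is prime, so this is useless.

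The key remaining options are therefore \(h=7\) or \(h=8\) with \(mn=83\) or \(82\). These are unusable: \(83\) is prime and \(82=2\cdot41\). Similarly \(h=9\) gives \(mn=81\), so \(m=n=9\), and \(m+1=10\) fails. Taking \(h=11\) gives \(79\), prime; \(h=12\) gives \(78=6\cdot 13\) with \(m=6\), which is bad; \(h=13\) gives \(77=7\cdot 11\) with \(h>n\) in either order. \(h=15\) gives \(75\), which would need \(m=5\) or \(m=3\), but those give only classical squares, so this is unusable.

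Hence the singular direct product seems to fail for \(90\), and the intended route is probably different. Our plan is to use the \(v\equiv 2\pmod 4\) analogue of Construction~\ref{con:Z2Zm}: take \(G=\mathbb Z_3\times\mathbb Z_{30}\) or \(\mathbb Z_3\times\mathbb Z_3\times\mathbb Z_{10}\), which have odd-order-free Sylow \(2\)-subgroup issues avoided since Hall–Paige requires a noncyclic or trivial Sylow \(2\)-subgroup. This fails for \(\mathbb Z_2\)-part cyclic, so no complete mapping exists on any abelian group of order \(90\).

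We therefore fall back on a direct-product-type approach: combine a maximal \(1\text{-}\text{RQLS}(9)\) with a classical \(2\text{-}\text{MOLS}(10)\) via the tensor product, taking \(A\otimes B\) blockwise with \(9\cdot10\) transversals \(T_i\otimes T'_j\). The main obstacle is maximal cardinality: the tensor product yields only \(81\cdot 10\) distinct entries. We would fix this by replacing each block \(|b\rangle\otimes L\) with \(|b\rangle\otimes L^{(b,\text{cell})}\), using Lemma~\ref{lem:disjoint-QLSs}(1) to give the \(100\) blocks pairwise disjoint \(1\text{-}\text{RQLS}(9)\)s. We would choose these as unitary images \(U_{x}L\) with all \(U_x\) sharing a common transversal structure, so that the transversals \(\{|k\rangle\otimes U_xT_e\}\) stay orthonormal. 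This works precisely when, for fixed \(e\), the vectors from different blocks in the same classical transversal are orthogonal through the classical factor. Verifying distinctness then uses Lemma~\ref{lem:abca}.
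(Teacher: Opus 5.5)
Your final construction is correct and is essentially the paper's proof. It takes a block direct product of a classical \(1\text{-}\text{RQLS}(10)\), coming from two MOLS of order \(10\), with maximal-cardinality \(1\text{-}\text{RQLS}(9)\)s whose entries are mutually distinct (unitary images, via Lemma~\ref{lem:disjoint-QLSs}(1)), and its \(90\) transversals are products of a transversal of the classical square with a transversal of the inner square.

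The only difference is minor. The paper uses ten inner squares \(B^{(g)}\), one for each transversal \(T_g\) of the classical square, and its distinctness argument uses that the cells of \(T_g\) carry distinct basis vectors; you use a separate square for each of the \(100\) blocks, which makes distinctness immediate. Also, the condition you flag with ``works precisely when'' always holds, because distinct cells of a transversal of a classical square carry orthogonal basis vectors.
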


\begin{proof}
Let \(A=(|a_{i,j}\rangle)\) be a classical \(1\text{-}\text{RQLS}(10)\), and let \(\{T_g:g\in[10]\}\) be a partition of its entries into ten transversals. If \((i,j)\in T_g\), denote \(|a_{i,j}\rangle\) by \(|a_{i,j,g}\rangle\). By Theorem~\ref{th:1.1}{\rm (2)}, there exists a maximal-cardinality \(1\text{-}\text{RQLS}(9)\). By Lemma~\ref{lem:disjoint-QLSs}{\rm (1)}, together with the fact that unitary transformations preserve resolvability, there exist ten maximal-cardinality \(1\text{-}\text{RQLS}(9)\text{s}\) $B^{(g)}=(|b^{(g)}_{k,l}\rangle)$,  $g\in[10]$, such that all elements among them are mutually distinct.

We construct a \(90\times90\) array \(M\), partitioned into
\(10^2\) blocks of size \(9\times9\). For \(i,j\in[10]\), suppose
that \((i,j)\in T_g\). Define the \((i,j)\)-th block of \(M\) by
\[
M_{i,j}
=
|a_{i,j,g}\rangle\otimes B^{(g)}
=
\bigl(
|a_{i,j,g}\rangle\otimes|b^{(g)}_{k,l}\rangle
\bigr)_{k,l\in[9]}.
\]

By the standard direct-product argument, every row and column of \(M\) forms an orthonormal basis of \(\mathcal H_{10}\otimes\mathcal H_9=\mathcal H_{90}\).
Hence \(M\) is a \(\text{QLS}(90)\).  Suppose that two entries
$|a_{i,j,g}\rangle\otimes|b^{(g)}_{k,l}\rangle \quad\text{and}\quad |a_{i',j',g'}\rangle\otimes|b^{(g')}_{k',l'}\rangle $
are identical. By Lemma~\ref{lem:abca}, their corresponding factors
are identical. Since the elements among the \(B^{(g)}\text{s}\) are
mutually distinct, we have \(g=g'\). As \(T_g\) is a transversal of
\(A\), this implies \((i,j)=(i',j')\). Since \(B^{(g)}\) has maximal
cardinality, we further obtain \((k,l)=(k',l')\). Thus the two entries
occupy the same position in \(M\). Hence all \(90^2\) entries of \(M\)
are mutually distinct, and \(M\) has maximal cardinality.

It remains to show that \(M\) is resolvable. For \(g\in[10]\) and \(e\in[9]\), let \(S^{(g)}_e\) denote the \(e\)-th transversal of \(B^{(g)}\), and define \[ \mathcal T_{g,e} = \bigl\{ |a_{i,j,g}\rangle\otimes|b^{(g)}_{k,l}\rangle: (i,j)\in T_g,\ (k,l)\in S^{(g)}_e \bigr\}. \] Each \(\mathcal T_{g,e}\) consists of \(90\) entries lying in distinct rows and columns of \(M\). If two entries of \(\mathcal T_{g,e}\) arise from distinct positions of \(T_g\), then their first factors are orthogonal. If they arise from the same position of \(T_g\), then their second factors are distinct elements of the transversal \(S^{(g)}_e\), and hence are orthogonal. Therefore, the entries of \(\mathcal T_{g,e}\) form an orthonormal basis of \(\mathcal H_{90}\), and thus \(\mathcal T_{g,e}\) is a transversal of \(M\). Moreover, since the transversals \(T_g\), \(g\in[10]\), partition the entries of \(A\), and the transversals \(S^{(g)}_e\), \(e\in[9]\), partition the entries of \(B^{(g)}\), the \(90\) transversals $\{\mathcal T_{g,e}:g\in[10],\,e\in[9]\}$ are mutually disjoint and partition the entries of \(M\).  Therefore, \(M\) is a \(1\text{-}\text{RQLS}(90)\) with maximal cardinality.
\end{proof}

We are now ready to prove our main result.

\textbf{Proof of Theorem~\ref{th:1.2}.} By Theorem~\ref{th:1.1}{\rm (2)}, there exists a maximal-cardinality \(1\text{-}\text{RQLS}(v)\) for every odd integer \(v\geq7\). By Lemma~\ref{th:max-RQLS-4k}, there also exists a maximal-cardinality \(1\text{-}\text{RQLS}(v)\) for every \(v\geq8\) with
\(v\equiv0\pmod4\). Hence it remains to consider \(v\equiv2\pmod4\).

We apply Construction~\ref{con:Singular} with \(m=7\). Since maximal-cardinality \(1\text{-}\text{RQLS}(7)\) and \(1\text{-}\text{RQLS}(8)\) exist,
Lemma~\ref{lem:conditions-234} implies that conditions
{\rm (ii)--(iv)} of Construction~\ref{con:Singular} can be satisfied.
For each residue class of \(v\) modulo \(7\), choose \(h\) according
to the following table:
\[
\begin{array}{c|ccccccc}
v\pmod7 & 0 & 1 & 2 & 3 & 4 & 5 & 6\\
\hline
h       & 7 & 1 & 9 & 17 & 11 & 12 & 13
\end{array}
\]
and set \(n=(v-h)/7\), so that \(v=7n+h\). By
Theorem~\ref{th:1.1}{\rm (2)}, maximal-cardinality
\(1\text{-}\text{RQLS}(h)\text{s}\) exist for
\(h\in\{7,9,11,13,17\}\), while the case \(h=12\) follows from
Lemma~\ref{th:max-RQLS-4k}, and the case \(h=1\) is trivial.
Hence condition~{\rm (v)} of Construction~\ref{con:Singular} is
satisfied.

It remains to verify condition~{\rm (i)}. For the above choices of
\(h\), this condition is satisfied whenever \(n\geq h\) and
\(n\notin\{2,3,6,10\}\). Restricting to
\(v\equiv2\pmod4\), the orders obtained from
Construction~\ref{con:Singular} and the remaining orders are
summarized as follows:
\[
\begin{array}{c|c|c|c}
h
& v=7n+h
& \text{orders obtained}
& \text{remaining orders}\\
\hline
1  & v=7n+1  & 50,\quad v\geq78  & 22\\
7  & v=7n+7  & v\geq70           & 14,42\\
9  & v=7n+9  & v\geq86           & 30,58\\
11 & v=7n+11 & v\geq102          & 18,46,74\\
12 & v=7n+12 & v\geq110          & 26,54,82\\
13 & v=7n+13 & v\geq118          & 34,62,90\\
17 & v=7n+17 & v\geq150          & 10,38,66,94,122
\end{array}
\]

Among the remaining orders listed in the table, the case \(v=90\) has already been settled by Lemma~\ref{lem:RQLS-90}. The order \(122\) can be further settled by applying Construction~\ref{con:Singular} with \(m=n=11\) and \(h=1\). By Theorem~\ref{th:1.1}{\rm (2)} and Lemma~\ref{th:max-RQLS-4k}, maximal-cardinality \(1\text{-}\text{RQLS}(11)\) and \(1\text{-}\text{RQLS}(12)\) exist, respectively. Hence Lemma~\ref{lem:conditions-234} gives conditions {\rm (ii)--(iv)}, while conditions {\rm (i)} and {\rm (v)} follow from \(11\notin\{2,3,6,10\}\) and \(h=1\), respectively.

Therefore, for every integer \(v\geq7\) with
\(v\notin\{10,14,18,22,26,30,34,38,42,46,54,58,62,66,\)\\\(74,82,94\}\),
there exists a maximal-cardinality \(1\text{-}\text{RQLS}(v)\). This completes the proof.

\section{Concluding remarks}

In this paper, we have introduced \(t\)-resolvable quantum Latin squares and established their equivalence with a class of mutually orthogonal quantum Latin squares. We have also investigated the existence of maximal-cardinality \(1\text{-}\text{RQLS}(v)\text{s}\) and obtained several existence results for maximal-cardinality \(2\text{-}\text{RQLS}(v)\text{s}\).

The results presented in this paper can also address some of the open problems left in the work by Huang and Li \cite{Liyang}. A \(\text{QLS}(v)\) is called {\it idempotent} if its main diagonal is a transversal. They proved that, for every integer \(v\geq6\), there exists a maximal-cardinality idempotent \(\text{QLS}(v)\), with possible exceptional orders \(M_1=\{6,8,10,12,14,18,20,\)\(24,26,30,32,38,62\}\).
They further defined a {\it diagonal quantum Latin square}, denoted by \(\text{DQLS}(v)\), to be a \(\text{QLS}(v)\) whose main diagonal and
anti-diagonal are both transversals. For even \(v\geq6\), they showed that a maximal-cardinality \(\text{DQLS}(v)\) exists, with possible exceptional
orders   \(M_2=\{6,8,10,12,14,18,20,22,24,26,30,32,34,38,40,46, 48,50,58,62,74,82,94,122\}\).

Combining Lemma~\ref{th:max-RQLS-4k} with the results of \cite{Liyang}, the possible exceptional orders for maximal-cardinality idempotent quantum Latin squares are reduced to \(M_1\setminus\{8,12,20,24,32\}\), while the possible even exceptional orders for maximal-cardinality diagonal quantum Latin squares are reduced to \(M_2\setminus\{8,12,20,24,32,40,48\}\).

For maximal-cardinality \(1\text{-}\text{RQLS}(v)\text{s}\), our main existence result leaves only \(17\) possible exceptional orders. It remains to determine whether a maximal-cardinality \(1\text{-}\text{RQLS}(v)\) exists for
$v\in\{10,14,18,22,26,30,34,38,42,46,54,58,62,66,74,82,94\}$.

For maximal-cardinality \(2\text{-}\text{RQLS}(v)\text{s}\), the known result at present are  \(\gcd(v,6)=1\) and \(v\notin\{5,7,11\}\) or \(v=n^2\) for all \(n\geq6\). It remains to determine the existence of maximal-cardinality \(2\text{-}\text{RQLS}(v)\text{s}\) for the remaining orders.

Finally, by Corollary~\ref{cor:MOQLS}, every maximal-cardinality \(1\text{-}\text{RQLS}(v)\) obtained in this paper gives rise to a non-classical \(2\text{-}\text{MOQLS}(v)\) consisting of one maximal-cardinality quantum Latin square and one classical quantum Latin square. Likewise, every maximal-cardinality \(2\text{-}\text{RQLS}(v)\) yields a non-classical \(3\text{-}\text{MOQLS}(v)\) consisting of one maximal-cardinality
quantum Latin square and two classical quantum Latin squares. For the case $t\ge 3$, we have not yet found an example of a maximal-cardinality \(t\text{-}\text{RQLS}(v)\), which also constitutes an interesting open problem.
~

\noindent{\bf Acknowledgments} The authors would like to express their sincere gratitude to  Prof. Lie Zhu (Soochow University) for his expert guidance and constructive discussions. H. Cao's research was supported by the National Natural Science Foundation of China (Grants No. 12471313 and No. 12071226). Y.Zhang's research was supported by the Postgraduate Research \& Practice Innovation Program of Jiangsu Province (No.~26CXJH3078).

~

\noindent{\bf Appendix A. \(M\) is Resolvable in Construction~\ref{con:Singular}}

\begin{proof}
We first show that each \(\mathcal R_{q,e}\), \(q\in[n]\) and \(e\in[m]\), is a transversal of \(M\). As shown in the proof of Construction~\ref{con:Singular}, \(\mathcal R_{q,e}\) contains \(mn+h\) entries lying in distinct rows and columns of \(M\). It remains to show that its entries are mutually orthogonal.
Let \(\alpha\) and \(\beta\) be two distinct entries of \(\mathcal R_{q,e}\). We distinguish the following five cases.

\textbf{Case 1}. \(\alpha=|a_{i,j,g,q}\rangle\otimes_{+}|b^{(g)}_{k,l,e}\rangle\) and \(\beta=|a_{i,j,g,q}\rangle\otimes_{+}|b^{(g)}_{k',l',e}\rangle\),
where \((k,l)\neq(k',l')\).

 Since \(|b^{(g)}_{k,l,e}\rangle\) and \(|b^{(g)}_{k',l',e}\rangle\) belong to the \(e\)-th transversal of \(B^{(g)}\), \(\bigl(|b^{(g)}_{k,l,e}\rangle, |b^{(g)}_{k',l',e}\rangle\bigr)=0\). Hence
\[
\begin{aligned}
(\alpha,\beta)
&=
\left(
\begin{pmatrix}
|a_{i,j,g,q}\rangle\otimes|b^{(g)}_{k,l,e}\rangle\\
\mathbf{0}_h
\end{pmatrix},
\begin{pmatrix}
|a_{i,j,g,q}\rangle\otimes|b^{(g)}_{k',l',e}\rangle\\
\mathbf{0}_h
\end{pmatrix}
\right)\\
&=
\bigl(|a_{i,j,g,q}\rangle,|a_{i,j,g,q}\rangle\bigr)
\bigl(|b^{(g)}_{k,l,e}\rangle,
|b^{(g)}_{k',l',e}\rangle\bigr)\\
&=0.
\end{aligned}
\]

\textbf{Case 2}. \(\alpha=|a_{i,j,n-h+r,q}\rangle\otimes_r|c^{(r)}_{s,t,e}\rangle\) and \(\beta=|a_{i,j,n-h+r,q}\rangle\otimes_r|c^{(r)}_{s',t',e}\rangle\),
where \((s,t)\neq(s',t')\).

The notation \(|c^{(r)}_{s,t,e,[m-1]}\rangle\) denotes the projection of \(|c^{(r)}_{s,t,e}\rangle\) onto its first \(m\) components, while \(c^{(r)}_{s,t,e,m}\) denotes its last component. Since \(|c^{(r)}_{s,t,e}\rangle\) and \(|c^{(r)}_{s',t',e}\rangle\) belong to the \(e\)-th transversal of \(C^{(r)}\), \(\bigl(|c^{(r)}_{s,t,e}\rangle, |c^{(r)}_{s',t',e}\rangle\bigr)=0\). Thus
\[
\begin{aligned}
(\alpha,\beta)
&=
\left(
\begin{pmatrix}
|a_{i,j,n-h+r,q}\rangle\otimes
|c^{(r)}_{s,t,e,[m-1]}\rangle\\
c^{(r)}_{s,t,e,m}|r\rangle
\end{pmatrix},
\begin{pmatrix}
|a_{i,j,n-h+r,q}\rangle\otimes
|c^{(r)}_{s',t',e,[m-1]}\rangle\\
c^{(r)}_{s',t',e,m}|r\rangle
\end{pmatrix}
\right)\\
&=
\bigl(|a_{i,j,n-h+r,q}\rangle,
|a_{i,j,n-h+r,q}\rangle\bigr)
\bigl(|c^{(r)}_{s,t,e,[m-1]}\rangle,
|c^{(r)}_{s',t',e,[m-1]}\rangle\bigr)
+\overline{c^{(r)}_{s,t,e,m}}\,c^{(r)}_{s',t',e,m}\\
&=
\bigl(|c^{(r)}_{s,t,e}\rangle,
|c^{(r)}_{s',t',e}\rangle\bigr)\\
&=0.
\end{aligned}
\]

\textbf{Case 3}.  \(\alpha=|a_{i,j,g,q}\rangle\otimes_{+}|b^{(g)}_{k,l,e}\rangle\) and \(\beta=|a_{i',j',g',q}\rangle\otimes_{+}|b^{(g')}_{k',l',e}\rangle\), where \((i,j)\neq(i',j')\).

Since \((i,j)\) and \((i',j')\) are two distinct positions in the transversal \(T'_q\) of \(A\), we have \(\bigl(|a_{i,j,g,q}\rangle, |a_{i',j',g',q}\rangle\bigr)=0\). Hence
\[
\begin{aligned}
(\alpha,\beta)
&=
\left(
\begin{pmatrix}
|a_{i,j,g,q}\rangle\otimes|b^{(g)}_{k,l,e}\rangle\\
\mathbf{0}_h
\end{pmatrix},
\begin{pmatrix}
|a_{i',j',g',q}\rangle\otimes|b^{(g')}_{k',l',e}\rangle\\
\mathbf{0}_h
\end{pmatrix}
\right)\\
&=
\bigl(|a_{i,j,g,q}\rangle,
|a_{i',j',g',q}\rangle\bigr)
\bigl(|b^{(g)}_{k,l,e}\rangle,
|b^{(g')}_{k',l',e}\rangle\bigr)\\
&=0.
\end{aligned}
\]

\textbf{Case 4}. \(\alpha=|a_{i,j,n-h+r,q}\rangle\otimes_r|c^{(r)}_{s,t,e}\rangle\) and \(\beta=|a_{i',j',n-h+r',q}\rangle\otimes_{r'}|c^{(r')}_{s',t',e}\rangle\), where \((i,j)\neq(i',j')\).

Since \((i,j)\) and \((i',j')\) are distinct positions in \(T'_q\), \(\bigl(|a_{i,j,n-h+r,q}\rangle, |a_{i',j',n-h+r',q}\rangle\bigr)=0\).
Moreover, since \((i,j)\in T_{n-h+r}\cap T'_q\) and \((i',j')\in T_{n-h+r'}\cap T'_q\), the condition \(|T_p\cap T'_q|=1\) implies \(r\neq r'\), and hence \((|r\rangle,|r'\rangle)=0\). Therefore,
\[
\begin{aligned}
(\alpha,\beta)
&=
\left(
\begin{pmatrix}
|a_{i,j,n-h+r,q}\rangle\otimes
|c^{(r)}_{s,t,e,[m-1]}\rangle\\
c^{(r)}_{s,t,e,m}|r\rangle
\end{pmatrix},
\begin{pmatrix}
|a_{i',j',n-h+r',q}\rangle\otimes
|c^{(r')}_{s',t',e,[m-1]}\rangle\\
c^{(r')}_{s',t',e,m}|r'\rangle
\end{pmatrix}
\right)\\
&=
\bigl(|a_{i,j,n-h+r,q}\rangle,
|a_{i',j',n-h+r',q}\rangle\bigr)
\bigl(|c^{(r)}_{s,t,e,[m-1]}\rangle,
|c^{(r')}_{s',t',e,[m-1]}\rangle\bigr)\\
&=0.
\end{aligned}
\]

\textbf{Case 5}. \(\alpha=|a_{i,j,g,q}\rangle\otimes_{+}|b^{(g)}_{k,l,e}\rangle\) and
\(\beta=|a_{i',j',n-h+r,q}\rangle\otimes_r|c^{(r)}_{s,t,e}\rangle\) , where \((i,j)\neq(i',j')\).

Since \((i,j)\) and \((i',j')\) are   distinct positions in  \(T'_q\), \(\bigl(|a_{i,j,g,q}\rangle, |a_{i',j',n-h+r,q}\rangle\bigr)=0\). Hence
\[
\begin{aligned}
(\alpha,\beta)
&=
\left(
\begin{pmatrix}
|a_{i,j,g,q}\rangle\otimes|b^{(g)}_{k,l,e}\rangle\\
\mathbf{0}_h
\end{pmatrix},
\begin{pmatrix}
|a_{i',j',n-h+r,q}\rangle\otimes
|c^{(r)}_{s,t,e,[m-1]}\rangle\\
c^{(r)}_{s,t,e,m}|r\rangle
\end{pmatrix}
\right)\\
&=
\bigl(|a_{i,j,g,q}\rangle,
|a_{i',j',n-h+r,q}\rangle\bigr)
\bigl(|b^{(g)}_{k,l,e}\rangle,
|c^{(r)}_{s,t,e,[m-1]}\rangle\bigr)\\
&=0.
\end{aligned}
\]

Hence the entries of \(\mathcal R_{q,e}\) are mutually orthogonal. Since \(|\mathcal R_{q,e}|=mn+h\), they form an orthonormal basis of \(\mathcal H_{mn+h}\). Therefore, \(\mathcal R_{q,e}\) is a transversal of \(M\).

We next show that each \(\mathcal S_r\), \(r\in[h]\), is a transversal of \(M\). As shown in the proof of Construction~\ref{con:Singular}, \(\mathcal S_r\) contains \(mn+h\) entries lying in distinct rows and columns of \(M\). It remains to show that its entries are mutually orthogonal. Let \(\alpha\) and \(\beta\) be two distinct entries of \(\mathcal S_r\). We distinguish the following four cases.

\textbf{Case 6}.
\(\alpha=|a_{i,j,n-h+r,q}\rangle\otimes_r
|c^{(r)}_{s,t,\mathrm I}\rangle\) and
\(\beta=|a_{i,j,n-h+r,q}\rangle\otimes_r
|c^{(r)}_{s',t',\mathrm I}\rangle\), where
\((s,t)\neq(s',t')\).

Since \(|c^{(r)}_{s,t,\mathrm I}\rangle\) and \(|c^{(r)}_{s',t',\mathrm I}\rangle\) belong to the incomplete transversal of \(C^{(r)}\), $\bigl(|c^{(r)}_{s,t,\mathrm I}\rangle,|c^{(r)}_{s',t',\mathrm I}\rangle\bigr)=0.$ Moreover, \(c^{(r)}_{s,t,\mathrm I,m}=c^{(r)}_{s',t',\mathrm I,m}=0\). Hence
\[
\begin{aligned}
(\alpha,\beta)
&=
\left(
\begin{pmatrix}
|a_{i,j,n-h+r,q}\rangle\otimes
|c^{(r)}_{s,t,\mathrm I,[m-1]}\rangle\\
\mathbf{0}_h
\end{pmatrix},
\begin{pmatrix}
|a_{i,j,n-h+r,q}\rangle\otimes
|c^{(r)}_{s',t',\mathrm I,[m-1]}\rangle\\
\mathbf{0}_h
\end{pmatrix}
\right)\\
&=
\bigl(|a_{i,j,n-h+r,q}\rangle,
|a_{i,j,n-h+r,q}\rangle\bigr)
\bigl(|c^{(r)}_{s,t,\mathrm I,[m-1]}\rangle,
|c^{(r)}_{s',t',\mathrm I,[m-1]}\rangle\bigr)\\
&=
\bigl(|c^{(r)}_{s,t,\mathrm I}\rangle,
|c^{(r)}_{s',t',\mathrm I}\rangle\bigr)\\
&=0.
\end{aligned}
\]

\textbf{Case 7}.
\(\alpha=|a_{i,j,n-h+r,q}\rangle\otimes_r
|c^{(r)}_{s,t,\mathrm I}\rangle\) and
\(\beta=|a_{i',j',n-h+r,q'}\rangle\otimes_r
|c^{(r)}_{s',t',\mathrm I}\rangle\), where
\((i,j)\neq(i',j')\).

Since \((i,j)\) and \((i',j')\) are distinct positions in \(T_{n-h+r}\), $\bigl(|a_{i,j,n-h+r,q}\rangle,|a_{i',j',n-h+r,q'}\rangle\bigr)=0$. Moreover, \(c^{(r)}_{s,t,\mathrm I,m} =c^{(r)}_{s',t',\mathrm I,m}=0\). Hence
\[
\begin{aligned}
(\alpha,\beta)
&=
\left(
\begin{pmatrix}
|a_{i,j,n-h+r,q}\rangle\otimes
|c^{(r)}_{s,t,\mathrm I,[m-1]}\rangle\\
\mathbf{0}_h
\end{pmatrix},
\begin{pmatrix}
|a_{i',j',n-h+r,q'}\rangle\otimes
|c^{(r)}_{s',t',\mathrm I,[m-1]}\rangle\\
\mathbf{0}_h
\end{pmatrix}
\right)\\
&=
\bigl(|a_{i,j,n-h+r,q}\rangle,
|a_{i',j',n-h+r,q'}\rangle\bigr)
\bigl(|c^{(r)}_{s,t,\mathrm I,[m-1]}\rangle,
|c^{(r)}_{s',t',\mathrm I,[m-1]}\rangle\bigr)\\
&=0.
\end{aligned}
\]

\textbf{Case 8}.
\(\alpha=
\begin{pmatrix}
\mathbf{0}_{mn}\\
|d_{u,v,r}\rangle
\end{pmatrix}\) and
\(\beta=
\begin{pmatrix}
\mathbf{0}_{mn}\\
|d_{u',v',r}\rangle
\end{pmatrix}\), where
\((u,v)\neq(u',v')\).

Since \(|d_{u,v,r}\rangle\) and \(|d_{u',v',r}\rangle\) belong to the
\(r\)-th transversal of \(D\),
\(\bigl(|d_{u,v,r}\rangle,|d_{u',v',r}\rangle\bigr)=0\). Hence
\[
\begin{aligned}
(\alpha,\beta)
&=
\left(
\begin{pmatrix}
\mathbf{0}_{mn}\\
|d_{u,v,r}\rangle
\end{pmatrix},
\begin{pmatrix}
\mathbf{0}_{mn}\\
|d_{u',v',r}\rangle
\end{pmatrix}
\right)\\
&=
\bigl(|d_{u,v,r}\rangle,
|d_{u',v',r}\rangle\bigr)\\
&=0.
\end{aligned}
\]

\textbf{Case 9}.
\(\alpha=|a_{i,j,n-h+r,q}\rangle\otimes_r
|c^{(r)}_{s,t,\mathrm I}\rangle\) and
\(\beta=
\begin{pmatrix}
\mathbf{0}_{mn}\\
|d_{u,v,r}\rangle
\end{pmatrix}\).

Since \(c^{(r)}_{s,t,\mathrm I,m}=0\), we have
\[
\begin{aligned}
(\alpha,\beta)
&=
\left(
\begin{pmatrix}
|a_{i,j,n-h+r,q}\rangle\otimes
|c^{(r)}_{s,t,\mathrm I,[m-1]}\rangle\\
\mathbf{0}_h
\end{pmatrix},
\begin{pmatrix}
\mathbf{0}_{mn}\\
|d_{u,v,r}\rangle
\end{pmatrix}
\right)\\
&=0.
\end{aligned}
\]

Hence the entries of \(\mathcal S_r\) are mutually orthogonal. Since \(|\mathcal S_r|=mn+h\), they form an orthonormal basis of
\(\mathcal H_{mn+h}\). Therefore, \(\mathcal S_r\) is a transversal of \(M\).

Thus, \(\mathcal R_{q,e}\) is a transversal of \(M\) for every \(q\in[n]\) and \(e\in[m]\), and \(\mathcal S_r\) is a transversal of \(M\) for every \(r\in[h]\). Since these \(mn+h\) transversals are mutually disjoint and partition the entries of \(M\), \(M\) is resolvable. Hence, \(M\) is a \(1\text{-}\text{RQLS}(mn+h)\) with maximal cardinality.
\end{proof}

\end{document}